\icmltitlerunning{Nonparanormal Information Estimation}
\newtheorem{theorem}{Theorem}
\newtheorem{lemma}[theorem]{Lemma}
\newtheorem{proposition}[theorem]{Proposition}
\newtheorem{corollary}[theorem]{Corollary}
\newtheorem{definition}[theorem]{Definition}
\renewenvironment{proof}{{\bf Proof:}}{\hfill\rule{2mm}{2mm}}
\newcommand{\inv}{^{-1}}                            
\newcommand{\N}{\mathbb{N}}                         
\newcommand{\R}{\mathbb{R}}                         
\newcommand{\e}{\varepsilon}                        
\newcommand{\E}{\mathop{\mathbb{E}}}                
\newcommand{\Var}{\mathbb{V}}                       
\newcommand{\Cov}{\operatornamewithlimits{Cov}}     
\newcommand{\Corr}{\operatornamewithlimits{Corr}}    
\newcommand{\pr}{\mathbb{P}}                        
\newcommand{\sign}{\operatorname{sign}}             
\renewcommand{\phi}{\varphi} 
\renewcommand{\hat}{\widehat}
\renewcommand{\tilde}{\widetilde}
\begin{document}

\twocolumn[
\icmltitle{Nonparanormal Information Estimation}
\title{Nonparanormal Information Estimation}
\icmlauthor{Shashank Singh}{sss1@andrew.cmu.edu}
\icmladdress{Carnegie Mellon University,
             5000 Forbes Ave., Pittsburgh, PA 15213 USA}
\icmlauthor{Barnab\'as P\'oczos}{bapoczos@cs.cmu.edu}
\icmladdress{Carnegie Mellon University,
             5000 Forbes Ave., Pittsburgh, PA 15213 USA}
\icmlkeywords{mutual information,entropy,nonparanormal,Gaussian copula,Spearman,Kendall}
\vskip 0.3in]

\begin{abstract}
We study the problem of using i.i.d. samples from an unknown multivariate probability distribution $p$ to estimate the mutual information of $p$. This problem has recently received attention in two settings: (1) where $p$ is assumed to be Gaussian and (2) where $p$ is assumed only to lie in a large nonparametric smoothness class. Estimators proposed for the Gaussian case converge in high dimensions when the Gaussian assumption holds, but are brittle, failing dramatically when $p$ is not Gaussian. Estimators proposed for the nonparametric case fail to converge with realistic sample sizes except in very low dimensions. As a result, there is a lack of robust mutual information estimators for many realistic data. To address this, we propose estimators for mutual information when $p$ is assumed to be a nonparanormal (a.k.a., Gaussian copula) model, a semiparametric compromise between Gaussian and nonparametric extremes. Using theoretical bounds and experiments, we show these estimators strike a practical balance between robustness and scaling with dimensionality.
\end{abstract}

\section{Introduction}
\label{sec:introduction}

This paper is concerned with the problem of estimating entropy or mutual information of an unknown probability density $p$ over $\R^D$, given $n$ i.i.d. samples from $p$. Entropy and mutual information are fundamental information theoretic quantities, and consistent estimators for these quantities have a host of applications within machine learning, statistics, and signal processing. For example, entropy estimators have been used for goodness-of-fit testing~\citep{goria05new},  parameter estimation in semi-parametric models~\citep{Wolsztynski85minimum}, texture classification and image registration~\citep{hero01alpha,hero02applications},  change point detection~\citep{bercher00estimating}, and anomaly detection in networks~\citep{noble03graphAnomaly,nychis08empirical,berezinski15entropy}. Mutual information is a popular nonparametric measure of dependence, whose estimators have been used in feature selection~\citep{peng05feature,shishkin16efficient}, clustering~\citep{aghagolzadeh07hierarchical}, learning graphical models~\citep{chow68chowLiuTree}, fMRI data processing~\citep{chai09fMRI}, prediction of protein structures~\citep{adami04information}, boosting and facial expression recognition~\citep{shan05conditional}, and fitting deep nonlinear models~\citep{hunter16fittingDeepNonlinearModels}. Estimators for both entropy and mutual information have been used in independent component and subspace analysis \citep{radical03,szabo07undercomplete_TCC}.

Motivated by these and other applications, several very recent lines of work (discussed in \hyperref[sec:related_work]{Section~\ref{sec:related_work}}) have studied information estimation,\footnote{We will collectively call the closely related problems of entropy and mutual information estimation \emph{information estimation}.} focusing largely on two settings:
\begin{enumerate}[wide,noitemsep,topsep=0mm]
\item
{\bf Gaussian Setting:} If $p$ is known to be Gaussian, there exist information estimators with mean squared error (MSE) at most $2\log \left(1 - \frac{D}{n} \right)$ and an (almost matching) minimax lower bound of $2D/n$ \citep{cai15logDetCov}.
\item
{\bf Nonparametric Setting:} If $p$ is assumed to lie in a nonparametric smoothness class, such an $s$-order\footnote{Here, $s$ encodes the degree of smoothness, roughly corresponding to the number of continuous derivatives of $p$.} H\'older or Sobolev class, then the minimax MSE is of asymptotic order $\asymp \max \left\{ n\inv, n^{-\frac{8s}{4s + D}} \right\}$ \citep{birge95densityFunctionals}.
\end{enumerate}

In the Gaussian setting, consistent estimation is tractable even in the high-dimensional case where $D$ increases fairly quickly with $n$, as long as $D/n \to 0$. However, optimal estimators for the Gaussian setting rely heavily on the assumption of joint Gaussianity, and their performance can degrade quickly when the data deviate from Gaussian. Especially in high dimensions, it is unlikely that data are jointly Gaussian, making these estimators brittle in practice. In the nonparametric setting, the theoretical convergence rate decays exponentially with $D$, and, it has been found empirically that information estimators for this setting fail to converge at realistic sample sizes in all but very low dimensions. Also, most nonparametric estimators are sensitive to tuning of bandwidth parameters, which is challenging for information estimation, since no empirical error estimate is available for cross-validation.

Given these factors, though the Gaussian and nonparametric cases are fairly well understood in theory, there remains a lack of practical information estimators for the common case where data are neither exactly Gaussian nor very low dimensional. The {\bf main goal of this paper} is to fill the gap between these two extreme settings by studying information estimation in a semiparametric compromise between the two, known as the ``nonparanormal'' (a.k.a. ``Gaussian copula'') model (see \hyperref[def:nonparanormal]{Definition \ref{def:nonparanormal}}). The nonparanormal model, analogous to the additive model popular in regression~\citep{friedman81projection}, limits complexity of interactions among variables but makes minimal assumptions on the marginal distribution of each variable. The result scales better with dimension than nonparametric models, while being more robust than Gaussian models.

{\bf Paper Organization:} \hyperref[sec:problem_statement]{Section~\ref{sec:problem_statement}} gives definitions and notation to formalize the nonparanormal information estimation problem. \hyperref[sec:related_work]{Section~\ref{sec:related_work}} discusses the history of the nonparanormal model and prior work on information estimation, motivating our contributions. \hyperref[sec:estimators]{Section~\ref{sec:estimators}} proposes three estimators, while \hyperref[sec:main_results]{Section~\ref{sec:main_results}} presents our theoretical error bounds, proven in the Appendix. 
\hyperref[sec:empirical]{Section~\ref{sec:empirical}} provides simulation results. While most of the paper discusses mutual information estimation, \hyperref[sec:entropy]{Section~\ref{sec:entropy}} discusses additional considerations arising in entropy estimation.  \hyperref[sec:conc_and_future]{Section \ref{sec:conc_and_future}} presents some concluding thoughts and avenues for future work.

\section{Problem statement and notation}
\label{sec:problem_statement}
There are a number of distinct generalizations of mutual information to more than two variables. The definition we consider is simply the difference between the sum of marginal entropies and the joint entropy:
\begin{definition}
{\bf (Multivariate mutual information)}
Let $X_1,\dots,X_D$ be $\R$-valued random variables with a joint probability density
$p~:~\R^D~\to~[0, \infty)$ and marginal densities $p_1,...,p_D : \R \to [0, \infty)$. The \emph{multivariate mutual information
$I(X)$ of $X = (X_1,\dots,X_D)$} is defined by
\begin{align}
\notag
I(X)
& := \E_{X \sim p} \left[
        \log \left(
             \frac{p(X)}
                  {\prod_{j = 1}^D p_j(X_j)}
        \right)
      \right] \\
\label{eq:entropy_relation}
& = \sum_{j = 1}^D H(X_j) - H(X),
\end{align}
where $H(X) = -\E_{X \sim p} [\log p(X)]$ denotes entropy of $X$.
\end{definition}

This notion of multivariate mutual information, originally due to \citet{watanabe60totalInformation} (who called it ``total correlation'') measures total dependency, or redundancy, within a set of $D$ random variables. It has also been called the ``multivariate constraint''~\citep{garner62multivariateConstraint} and ``multi-information''~\citep{studeny98multiinformation}. Many related information theoretic quantities can be expressed in terms of $I(X)$, and can thus be estimated using estimators of $I(X)$. Examples include pairwise mutual information $I(X,Y) = I((X,Y)) - I(X) - I(Y)$,
which measures dependence between (potentially multivariate) random variables $X$ and $Y$, conditional mutual information
\[I(X|Z) = I((X,Z)) - \sum_{j = 1}^D I((X_j,Z)),\]
which is useful for characterizing how much dependence within $X$ can be explained by a latent variable $Z$ \citep{studeny98multiinformation}, and transfer entropy (a.k.a. directed information) $T_{X~\to~Y}$, which measures predictive power of one time series $X$ on the future of another time series $Y$. $I(X)$ is also related to entropy via Eq.~\eqref{eq:entropy_relation}, but, unlike the above quantities, this relationship depends on the marginal distributions of $X$, and hence involves some additional considerations, as discussed in \hyperref[sec:entropy]{Section~\ref{sec:entropy}}.

We now define the class of nonparanormal distributions, from which we assume our data are drawn.
\begin{definition}
{\bf (Nonparanormal distribution, a.k.a. Gaussian copula model)}
A random vector $X = (X_1,\dots,X_D)^T$ is said to have a \emph{nonparanormal
distribution} (denoted $X \sim \mathcal{NPN}(\Sigma; f)$) if there exist functions $\{f_j\}_{j = 1}^D$ such that each
$f_j : \R \to \R$ is a diffeomorphism
\footnote{A diffeomorphism is a continuously differentiable bijection $g : \R \to R \subseteq \R$ such that $g\inv$ is continuously differentiable.
}
and $f(X) \sim \mathcal{N}(0, \Sigma)$, for some (strictly) positive definite $\Sigma \in \R^{D \times D}$ with $1$'s on the diagonal (i.e., each $\sigma_j = \Sigma_{j,j} = 1$).
\footnote{Setting $\E \left[ f(X) \right] = 0$ and each $\sigma_j = 1$ ensures model identifiability, but does not reduce the model space, since these parameters can be absorbed into the marginal transformation $f$.} $\Sigma$ is called the \emph{latent covariance} of $X$ and $f$ is called the \emph{marginal transformation} of $X$.
\label{def:nonparanormal}
\end{definition}

The nonparanormal family relaxes many constraints of the Gaussian family. Nonparanormal distributions can be multi-modal or heavy-tailed, can encode noisy nonlinear dependencies amongst variables, and need not be supported on $\R^D$. Assumptions made by a nonparanormal model on the marginals are minimal; any desired continuously differentiable marginal cumulative distribution function (CDF) $F_i$ of the variable $X_i$ corresponds to the marginal transformation $f_i(x) = \Phi\inv(F_i(x))$ (where $\Phi$ is the standard normal CDF). As examples, for a Gaussian variable $Z$, the $2$-dimensional case, $X_1\sim\mathcal{N}(0,1)$, and $X_2 = T(X_1 + Z)$ is completely captured by a Gaussian copula when $T(x) = x^3$, $T = \tanh$, $T = \Phi$, or any other diffeomorphism. On the other hand, the limits of the Gaussian copula appear, for example, when $T(x) = x^2$, which is not bijective; then, if $\E[Z] = 0$, the Gaussian copula approximation of $(X_1,X_2)$ will model $X_1$ and $X_2$ as independent.

We are now ready to formally state our problem:

{\bf Formal Problem Statement:}
\emph{Given $n$ i.i.d. samples $X_1,...,X_n \sim \mathcal{NPN}(\Sigma;f)$, where $\Sigma$ and $f$ are both unknown, we would like to estimate $I(X)$.}

{\bf Other notation:} $D$ denotes the dimension of the data (i.e., $\Sigma \in \R^{D \times D}$ and $f : \R^D \to \R^D$). For a positive integer $k$, $[k] = \{1,...,k\}$ denotes the set of positive integers less than $k$ (inclusive). For consistency, where possible, we use $i \in [n]$ to index samples and $j \in [D]$ to index dimensions (so that, e.g., $X_{i,j}$ denotes the $j^{th}$ dimension of the $i^{th}$ sample). Given a data matrix $X \in \R^{n \times D}$, our estimators depend on the empirical rank matrix
\begin{equation}
R \in [n]^{n \times D}
\quad \text{ with } \quad
R_{i,j} := \sum_{k = 1}^n 1_{\{X_{i,j} \geq X_{k,j} \}}.
\label{def:empirical_rank_matrix}
\end{equation}

For a square matrix $A \in \R^{k \times k}$, $|A|$ denotes the determinant of $A$, $A^T$ denotes the transpose of $A$, and
\[\|A\|_2 := \max_{\scriptsize\shortstack{$x \in \R^k$ \\ $\|x\|_2 = 1$}} \|Ax\|_2
\quad \text{ and } \quad
\|A\|_F := \sqrt{\sum_{i, j \in [k]} A_{i,j}^2}\]
denote the spectral and Frobenius norms of $A$, respectively. When $A$ is symmetric, $\lambda_1(A) \geq \lambda_2(A) \geq \cdots \geq \lambda_D(A)$ are its eigenvalues.

\section{Related Work and Our Contributions}
\label{sec:related_work}
\subsection{The Nonparanormal}

Nonparanormal models have been used for modeling dependencies among high-dimensional data in a number of fields, such as graphical modeling of gene expression data~\citep{liu12SKEPTIC}, of neural data~\citep{berkes09neuralDependencies}, and of financial time series~\citep{malevergne03testing,wilson10copula,hernandez13gaussianCopulaFinancial}, extreme value analysis in hydrology~\citep{renard07hydrology,aghakouchak14entropy}, and informative data compression~\citep{rey12metaGaussianIB}.
Besides being more robust generalizations of Gaussians, nonparanormal distributions are also theoretically motivated in certain contexts. For example, the output $Z$ of a neuron is often modeled by feeding a weighted linear combination $Y = \sum_{k = 1}^N w_k X_k$ of inputs into a nonlinear transformation $Z = f(Y)$. When the components of $X$ are independent, the central limit theorem suggests $Y$ is approximately normally distributed, and hence $Z$ is approximately nonparanormally distributed~\citep{szabo07post}.

With one recent exception~\citep{ince16I_G}, previous information estimators for the nonparanormal case~\citep{calsaverini09copulaInformation,ma11copulaEntropy,elidan13copulas}, rely on fully nonparametric information estimators as subroutines, and hence suffer strongly from the curse of dimensionality. Very recently, \citet{ince16I_G} proposed what we believe is the first mutual information estimator tailored specifically to the nonparanormal case; their estimator is equivalent to one of the estimators ($I_G$, described in Section~\ref{subsec:I_G}) we study. However, they focused on its applications to neuroimaging data analysis, and did not study its performance theoretically or empirically.

\subsection{Information Estimation}
Our motivation for studying the nonparanormal family comes from trying to bridge two recent approaches to information estimation. The first has studied fully non-parametric entropy estimation, assuming only that data are drawn from a smooth probability density $p$, where smoothness is typically quantified by a H\"older or Sobolev exponent $s \in (0, \infty)$, roughly corresponding to the continuous differentiability of $s$.
In this setting, the minimax optimal MSE rate has been shown by \citet{birge95densityFunctionals} to be $O \left( \max \left\{ n\inv, n^{-\frac{8s}{4s + D}} \right\} \right)$.
This rate slows exponentially with the dimension $D$, and, while many estimators have been proposed \citep{pal10RenyiEntropyEstimation,sricharan10entropyConfidence,sricharan2013ensemble,singh2014exponential,singh2014generalized,krishnamurthy14Renyi,moon14fDivergence,moon14fDivergenceConfidence,singh16KNNFunctionals,moon17ensembleMI} for this setting, their practical use is limited to a few dimensions\footnote{``Few'' depends on $s$ and $n$, but \citet{kandasamy15vonMises} suggest nonparametric estimators should only be used with $D$ at most $4$-$6$. \citet{rey12metaGaussianIB} tried using several nonparametric information estimators on the \emph{Communities and Crime} UCI data set ($n = 2195, D = 10$), but found all too unstable to be useful.}.

The second area is in the setting where data are assumed to be drawn from a truly Gaussian distribution. Here the high-dimensional case is far more optimistic. While this case had been studied previously~\citep{ahmed89entropy,misra05estimation,srivastava08bayesian}, \citet{cai15logDetCov} recently provided a precise finite-sample analysis based on deriving the exact probability law of the log-determinant $\log|\hat\Sigma|$ of the scatter matrix $\hat\Sigma$. From this, they derived a deterministic bias correction, giving an estimator for which they prove an MSE upper bound of $2\log \left( 1 - \frac{D}{n} \right)$ and a high-dimensional central limit theorem for the case $D \to \infty$ as $n \to \infty$ (but $D < n$).

\citet{cai15logDetCov} also prove a minimax lower bound of $2D/n$ on MSE, with several interesting consequences. First, consistent information estimation is possible only if $D/n \to 0$. Second, since, for small $x$, $\log(1 - x) \approx x$, this lower bound essentially matches the above upper bound when $D/n$ is small. Third, they show this lower bound holds even when restricted to diagonal covariance matrices. Since the upper bound for the general case and the lower bound for the diagonal case essentially match, it follows that Gaussian information estimation is not made easier by structural assumptions such as $\Sigma$ being bandable, sparse, or Toeplitz, as is common in, for example, stationary Gaussian process models~\citep{cai12adaptiveCovarianceMatrix}.

This $2D/n$ lower bound extends to our more general nonparanormal setting. However, we provide a minimax lower bound suggesting that the nonparanormal setting is strictly harder, in that optimal rates depend on $\Sigma$. Our results imply nonparanormal information estimation \emph{does} become easier if $\Sigma$ is assumed to be bandable or Toeplitz.

A closely related point is that known convergence rates for the fully nonparametric case require the density $p$ to be bounded away from $0$ or have particular tail behavior, due to singularity of the logarithm near $0$ and resulting sensitivity of Shannon information-theoretic functionals to regions of low but non-zero probability. In contrast, \citet{cai15logDetCov} need no lower-bound-type assumptions in the Gaussian case. In the nonparanormal case, we show \emph{some} such condition is needed to prove a uniform rate, but a weaker condition, a positive lower bound on $\lambda_D(\Sigma)$, suffices.


The {\bf main contributions} of this paper are the following:
\begin{enumerate}[noitemsep,topsep=0mm]
\item
We propose three estimators, $\hat I_G$, $\hat I_\rho$, and $\hat I_\tau$,\footnote{\citet{ince16I_G} proposed $\hat I_G$ for use in neuroimaging data analysis. To the best of our knowledge, $\hat I_\rho$ and $\hat I_\tau$ are novel.} for the mutual information of a nonparanormal distribution.
\item
We prove upper bounds, of order $O(D^2/(\lambda_D^2(\Sigma)n))$ on the mean squared error of $\hat I_\rho$, providing the first upper bounds for a nonparanormal information estimator. This bound suggests nonparanormal estimators scale far better with $D$ than nonparametric estimators.
\item
We prove a minimax lower bound suggesting that, unlike the Gaussian case, difficulty of nonparanormal information estimation depends on the true $\Sigma$.
\item
We give simulations comparing our proposed estimators to Gaussian and nonparametric estimators. Besides confirming and augmenting our theoretical predictions, these help characterize the settings in which each nonparanormal estimator works best.
\item
We present entropy estimators based on $\hat I_G$, $\hat I_\rho$, and $\hat I_\tau$. Though nonparanormal entropy estimation requires somewhat different assumptions from mutual information estimation, we show that entropy can also be estimated at the rate $O(D^2/(\lambda_D^2(\Sigma)n))$.

\end{enumerate}



\section{Nonparanormal Information Estimators}
\label{sec:estimators}

In this section, we present three different estimators, $I_G$, $I_\rho$, and $I_\tau$, for the mutual information of a nonparanormal distribution. We begin with a lemma providing common motivation for all three estimators.

Since mutual information is invariant to diffeomorphisms of individual variables, it is easy to see that the mutual information of a nonparanormal random variable is the same as that of the latent Gaussian random variable. Specifically:
\begin{lemma}
{\bf (Nonparanormal mutual information):} Suppose $X \sim \mathcal{NPN}(\Sigma; f)$. Then,
\begin{equation}
I(X) = -\frac{1}{2} \log|\Sigma|.
\label{eq:gaussian_MI}
\end{equation}
\label{lemma:NPN_MI}
\end{lemma}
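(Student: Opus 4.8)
The plan is to prove the identity in two conceptually independent pieces and then combine them. The key fact to exploit is that mutual information $I(X)$, as defined via the ratio of joint to product-of-marginals densities, is invariant under applying any diffeomorphism separately to each coordinate. First I would establish this invariance: if $g_j : \R \to \R$ is a diffeomorphism applied to $X_j$, then the Jacobian of the coordinatewise map is diagonal, so the change-of-variables factors $|g_j'|$ appearing in the joint density cancel exactly against those appearing in each marginal density inside the logarithm. Concretely, writing $Y_j = g_j(X_j)$, one checks that $\frac{p_Y(y)}{\prod_j p_{Y_j}(y_j)} = \frac{p_X(x)}{\prod_j p_{X_j}(x_j)}$ pointwise under the substitution $y = g(x)$, whence the expectations defining $I(Y)$ and $I(X)$ agree.

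Applying this to $g_j = f_j$, the marginal transformations from Definition~\ref{def:nonparanormal}, gives $I(X) = I(f(X))$ where $f(X) \sim \mathcal{N}(0, \Sigma)$. So the problem reduces to computing the mutual information of a centered Gaussian with covariance $\Sigma$ having unit diagonal. For this second piece I would use the entropy form~\eqref{eq:entropy_relation}, namely $I(X) = \sum_{j=1}^D H(f_j(X_j)) - H(f(X))$, together with the standard formula for the differential entropy of a multivariate Gaussian, $H(\mathcal{N}(\mu, \Sigma)) = \frac{1}{2}\log\left((2\pi e)^D |\Sigma|\right)$. Each marginal $f_j(X_j)$ is a univariate $\mathcal{N}(0,1)$ variable (since $\sigma_j = \Sigma_{j,j} = 1$), so $H(f_j(X_j)) = \frac{1}{2}\log(2\pi e)$ and the sum of marginal entropies is $\frac{D}{2}\log(2\pi e)$. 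Subtracting the joint entropy, the $\frac{D}{2}\log(2\pi e)$ terms cancel and one is left with $-\frac{1}{2}\log|\Sigma|$, as claimed.

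I expect the substantive content to be the diffeomorphism invariance in the first step; the Gaussian computation is entirely standard once invariance is in hand. The main technical care needed is in the change of variables for the densities: one must verify that the $|g_j'(x_j)|$ factors truly cancel, which hinges on the map $g$ being coordinatewise (so its Jacobian determinant factors as $\prod_j |g_j'(x_j)|$) and on each $f_j$ being a diffeomorphism so that densities transform cleanly and the marginals of $Y$ are exactly the pushforwards of the marginals of $X$. A cleaner alternative, which I would likely prefer for brevity, is to bypass densities entirely and argue directly from~\eqref{eq:entropy_relation}: invariance of $I(X)$ under coordinatewise diffeomorphisms follows because the change in joint entropy $H(f(X)) - H(X) = \E[\sum_j \log|f_j'(X_j)|]$ is exactly matched by the sum of changes in marginal entropies, $\sum_j \left(H(f_j(X_j)) - H(X_j)\right)$, again by the univariate change-of-variables formula for differential entropy. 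Either route reduces everything to the Gaussian entropy formula, so the only genuine obstacle is bookkeeping the Jacobian cancellation correctly.
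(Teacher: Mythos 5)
Your proposal is correct and follows essentially the same route as the paper, which proves this lemma only by the one-line remark that mutual information is invariant under coordinatewise diffeomorphisms (so $I(X) = I(f(X))$) combined with the standard Gaussian formula $I(f(X)) = -\frac{1}{2}\log|\Sigma|$. Your write-up simply supplies the Jacobian-cancellation and Gaussian-entropy details that the paper leaves implicit.
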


Lemma~\ref{lemma:NPN_MI} shows that mutual information of a nonparanormal random variable depends only the latent covariance $\Sigma$; the marginal transformations are nuisance parameters, allowing us to avoid difficult nonparametric estimation; the estimators we propose all plug different estimates of $\Sigma$ into Eq.~\eqref{eq:gaussian_MI}, after a regularization step described in Section \ref{subsec:regularization}.

\subsection{Estimating $\Sigma$ by Gaussianization}
\label{subsec:I_G}

The first estimator $\hat \Sigma_G$ of $\Sigma$ proceeds in two steps. First, the data are transformed to have approximately standard normal marginal distributions, a process \citet{szabo07post} referred to as ``Gaussianization''. By the nonparanormal assumption, the Gaussianized data are approximately jointly Gaussian. Then, the latent covariance matrix is estimated by the empirical covariance of the Gaussianized data.

More specifically, letting $\Phi\inv$ denote the quantile function of the standard normal distribution and recalling the rank matrix $R$ defined in \eqref{def:empirical_rank_matrix}, the Gaussianized data
\[\tilde{X}_{i,j}
  := \Phi\inv \left( \frac{R_{i,j}}{n + 1} \right)
  \quad (\text{for } i \in [n], j \in [D])\]
are obtained by transforming the empirical CDF of the each dimension to approximate $\Phi$. Then, we estimate $\Sigma$ by the empirical covariance
$\hat \Sigma_G
  := \frac{1}{n}
      \sum_{i = 1}^n \tilde{X}_i \tilde{X}_i^T$.

\subsection{Estimating $\Sigma$ by rank correlation}
\label{subsec:I_rho_and_I_tau}
The second estimator actually has two variants, $I_\rho$ and $I_\tau$, respectively based on relating the latent covariance to two classic rank-based dependence measures, Spearman's $\rho$ and Kendall's $\tau$. For two random variables $X$ and $Y$ with CDFs $F_X,F_Y : \R \to [0, 1]$, $\rho$ and $\tau$ are defined by
\begin{align*}
\rho(X, Y) & := \Corr(F_X(X),F_Y(Y)) \\
\text{and } \quad
\tau(X, Y) & := \Corr(\sign(X - X'), \sign(Y - Y')),
\end{align*}
respectively, where \[\Corr(X, Y) = \frac{\E[(X - \E[X])(Y - \E[Y])]}{\sqrt{\Var[X]\Var[Y]}}\] denotes the standard Pearson correlation operator and $(X',Y')$ is an IID copy of $(X,Y)$. $\rho$ and $\tau$ generalize to the $D$-dimensional setting in the form of rank correlation matrices $\rho, \tau \in [-1,1]^{D \times D}$ with $\rho_{i,j} = \rho(X_i, X_j)$ and $\tau_{i,j} = \tau(X_i, X_j)$ for each $i \in [n],j \in [D]$.

$I_\rho$ and $I_\tau$ are based on a classical result relating the correlation and rank-correlation of a bivariate Gaussian:
\begin{theorem}
{\bf \citep{kruskal58ordinal}:} Suppose $(X,Y)$ has a Gaussian joint distribution with covariance $\Sigma$. Then,
\[\Corr(X, Y)
 = 2\sin \left(\frac{\pi}{6} \rho(X, Y) \right)
 = \sin \left( \frac{\pi}{2} \tau(X, Y) \right).\]
 \label{thm:kruskal}
\end{theorem}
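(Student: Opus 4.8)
The plan is to reduce both identities to a single classical fact, the Sheppard orthant formula, which states that for a standard bivariate normal pair $(U,V)$ (zero mean, unit variances) with correlation $\gamma$,
\[
\pr(U > 0, V > 0) = \frac14 + \frac{1}{2\pi}\arcsin\gamma.
\]
Since $\Corr$, $\rho$, and $\tau$ are all invariant under strictly increasing marginal transformations, I would first assume without loss of generality that $X$ and $Y$ have unit variance, so that $r := \Corr(X,Y)$ equals the off-diagonal entry of $\Sigma$. Both halves of the theorem then follow by applying Sheppard's formula to a suitably constructed Gaussian pair whose correlation is either $r$ or $r/2$.

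For Kendall's $\tau$, note that because $X - X'$ and $Y - Y'$ are continuous and symmetric about $0$, each of $\sign(X-X')$ and $\sign(Y-Y')$ has mean $0$ and variance $1$, so $\tau = \E[\sign(X-X')\sign(Y-Y')] = 2\pr\left((X-X')(Y-Y') > 0\right) - 1$. The difference vector $(X-X', Y-Y')$ is centered bivariate normal with covariance $2\Sigma$, hence with the same correlation $r$ as $(X,Y)$; by symmetry $\pr\left((X-X')(Y-Y')>0\right) = 2\,\pr(X-X'>0,\, Y-Y'>0)$, so Sheppard's formula gives $\tau = \frac{2}{\pi}\arcsin r$, which rearranges to $r = \sin\left(\frac{\pi}{2}\tau\right)$.

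For Spearman's $\rho$, I would use $\Phi(X) = \pr(Z_1 \le X \mid X)$ for an auxiliary standard normal $Z_1$ independent of $(X,Y)$, and similarly $\Phi(Y) = \pr(Z_2 \le Y \mid Y)$ with $Z_2$ an independent standard normal. By conditional independence, $\E[\Phi(X)\Phi(Y)] = \pr(X - Z_1 \ge 0,\, Y - Z_2 \ge 0)$. The pair $(X - Z_1, Y - Z_2)$ is centered bivariate normal with variances $2$ and covariance $r$, hence correlation $r/2$, so Sheppard's formula yields $\E[\Phi(X)\Phi(Y)] = \frac14 + \frac{1}{2\pi}\arcsin(r/2)$. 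Since $\Phi(X)$ and $\Phi(Y)$ are each uniform on $[0,1]$, we have $\E[\Phi(X)] = 1/2$ and $\Var[\Phi(X)] = 1/12$, so $\rho = \Cov(\Phi(X),\Phi(Y))/\Var[\Phi(X)] = \frac{6}{\pi}\arcsin(r/2)$, giving $r = 2\sin\left(\frac{\pi}{6}\rho\right)$.

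The only substantive step is Sheppard's formula itself, which I would prove using the rotational invariance of the standard bivariate normal. Writing $U = Z_1$ and $V = \gamma Z_1 + \sqrt{1-\gamma^2}\, Z_2$ with $(Z_1,Z_2)$ isotropic standard normal, the event $\{U>0,\,V>0\}$ is an angular wedge in the plane, and its probability equals its opening angle divided by $2\pi$; a short computation shows the opening angle is $\frac{\pi}{2} + \arcsin\gamma$, which gives the claimed formula. Everything else is bookkeeping: verifying the covariance structure of each constructed pair, the value $\Var[\Phi(X)] = 1/12$, and the elementary trigonometric rearrangements. The part demanding the most care is tracking the correlation scalings, namely $r$ for $\tau$ but $r/2$ for $\rho$, since this is precisely what produces the differing constants $\frac{\pi}{2}$ and $\frac{\pi}{6}$ in the two identities.
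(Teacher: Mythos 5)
The paper does not actually prove this theorem: it is imported as a classical fact, cited to Kruskal (1958), and no derivation appears in either the body or the appendix. So the only question is whether your blind proof is sound, and it is. The reduction to standard marginals is legitimate since $\Corr$, $\rho(X,Y)$, and $\tau(X,Y)$ are all invariant under increasing affine maps of each coordinate; the covariance bookkeeping is right in both cases --- $(X-X',\,Y-Y')$ has covariance $2\Sigma$ and hence correlation $r$, while $(X-Z_1,\,Y-Z_2)$ has variances $2$ and covariance $r$, hence correlation $r/2$ --- and these scalings are exactly what produce the constants $\tfrac{\pi}{2}$ and $\tfrac{\pi}{6}$. The identities $\tau = \tfrac{2}{\pi}\arcsin r$ and $\rho = \tfrac{6}{\pi}\arcsin(r/2)$ then follow from Sheppard's orthant formula, whose wedge-angle proof via rotational invariance is also correct: the event $\{Z_1>0,\ \gamma Z_1 + \sqrt{1-\gamma^2}\,Z_2>0\}$ is a wedge of opening angle $\tfrac{\pi}{2}+\arcsin\gamma$. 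This is in fact the classical route (it is essentially how Kruskal, following Sheppard, derives both identities), so your proof is not a novel alternative so much as a correct reconstruction of the standard argument the paper chose to cite rather than reproduce. Two small points worth noting for completeness: the case $|r|=1$ (singular $\Sigma$) is handled by continuity of all three quantities in $r$, and your claim $\E[\Phi(X)\Phi(Y)] = \pr(X-Z_1\ge 0,\ Y-Z_2\ge 0)$ needs $Z_1, Z_2$ independent of each other as well as of $(X,Y)$, which you do state.
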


$\rho$ and $\tau$ are often preferred to Pearson correlation for their relative robustness to outliers and applicability to non-numerical ordinal data. While these are strengths here as well, the main reason for their relevance is that they are invariant to marginal transformations (i.e., for diffeomorphisms $f, g : \R \to \R$, $\rho(f(X),g(Y)) = \pm \rho(X, Y)$ and $\tau(f(X),g(Y)) = \pm \tau(X,Y)$). As a consequence, the identity provided in Theorem \ref{thm:kruskal} extends unchanged to the case $(X,Y) \sim \mathcal{NPN}(\Sigma;f)$. This suggests an estimate for $\Sigma$ based on estimating $\rho$ or $\tau$ and plugging this element-wise into the transform $x \mapsto 2\sin \left( \frac{\pi}{6} x \right)$ or $x \mapsto \sin \left( \frac{\pi}{2} x \right)$, respectively. Specifically, $\Sigma_\rho$ is defined by
\[\hat\Sigma_\rho := 2 \sin\left( \frac{\pi}{6} \hat \rho \right),
  \quad \text{ where } \quad
  \hat\rho = \widehat{\Corr}(R)\]
is the empirical correlation of the rank matrix $R$, and sine is applied element-wise. Similarly, $\hat\Sigma_\tau := \sin\left( \frac{\pi}{2} \hat \tau \right)$,
where
\[\hat \tau_{j,k} := \frac{1}{\binom{n}{2}}\sum_{i \neq \ell \in [n]} \sign(X_{i,j} - X_{\ell,j})\sign(X_{i,k} - X_{\ell,k}).\]

\subsection{Regularization and estimating $I$}
\label{subsec:regularization}
Unfortunately, unlike usual empirical correlation matrices, none of $\hat\Sigma_G$, $\hat\Sigma_\rho$, or $\hat\Sigma_\tau$ is almost surely strictly positive definite. As a result, directly plugging into the mutual information functional~\eqref{eq:gaussian_MI} may give $\infty$ or even be undefined.
To correct for this, we propose a regularization step, in which we project each estimated latent covariance matrix onto the (closed) cone $\mathcal{S}(z)$ of symmetric matrices with minimum eigenvalue $z > 0$. Specifically, for any $z > 0$, let
\[\mathcal{S}(z) := \left\{ A \in \R^{D \times D} : A = A^T, \lambda_D(A) \geq z \right\}.\]
For any symmetric matrix $A \in \R^{D \times D}$ with eigendecomposition $\hat \Sigma = Q \Lambda Q\inv$ (i.e., $QQ^T = Q^TQ = I_D$ and $\Lambda$ is diagonal), the projection $A_z$ of $A$ onto $\mathcal{S}(z)$ is defined as $A_z := Q \Lambda_z Q\inv$, where $\Lambda_z$ is the diagonal matrix with $j^{th}$ nonzero entry $\left( \Lambda_z \right)_{j,j} = \max\{ z, \Lambda_{j,j} \}$. We call this a ``projection'' because $A_z$ is precisely the Frobenius norm projection of $A$ onto $\mathcal{S}(z)$ (see, e.g., \citet{henrion12semidefiniteProjection}):
$A_z = {\arg\!\min}_{B \in \R^{D \times D}} \|A - B\|_F$.

Applying this regularization to $\hat\Sigma_G$, $\hat\Sigma_\rho$, or $\hat\Sigma_\tau$ gives a strictly positive definite estimate $\hat\Sigma_{G,z}$, $\hat\Sigma_{\rho,z}$, or $\hat\Sigma_{\tau,z}$, respectively, of $\Sigma$. We can then estimate $I$ by plugging this into Equation \eqref{eq:gaussian_MI}, giving our three estimators:
\begin{align*}
\hat I_{G,z}
:= -\frac{1}{2} \log \left| \hat\Sigma_{G,z} \right|,
\qquad
\hat I_{\rho,z}
:= -\frac{1}{2} \log \left| \hat\Sigma_{\rho,z} \right| \\
\text{ and } \quad
\hat I_{\tau,z}
:= -\frac{1}{2} \log \left| \hat\Sigma_{\tau,z} \right|.
\end{align*}

\section{Upper Bounds on the Error of $\hat I_{\rho,z}$}
\label{sec:main_results}
%
%
Here, we provide finite-sample upper bounds on the error of the estimator $\hat I_\rho$ based on Spearman's $\rho$. Proofs are given in the Appendix.
We first bound the bias of the estimator:
\begin{proposition}
Suppose $X_1,...,X_n \stackrel{i.i.d.}{\sim} \mathcal{NPN}(\Sigma;f)$. Then, there exists a constant $C > 0$ such that, for any $z > 0$, the bias of $\hat I_{\rho,z}$ is at most
\begin{align*}
\left| \E \left[ \hat I_{\rho,z} \right] - I \right|
\leq C \left( \frac{D}{z\sqrt{n}} + \log \frac{|\Sigma_z|}{|\Sigma|} \right),
\end{align*}
where $\Sigma_z$ is the projection of $\Sigma$ onto $\mathcal{S}(z)$.
\label{prop:I_rho_bias_bound}
\end{proposition}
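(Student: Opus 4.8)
The plan is to split the bias into the error from estimating $\Sigma$ and the error from the eigenvalue projection onto $\mathcal{S}(z)$. By Lemma~\ref{lemma:NPN_MI} and the definition of $\hat I_{\rho,z}$,
\[
\left| \E\left[ \hat I_{\rho,z} \right] - I \right|
= \tfrac12 \left| \E \log\left| \hat\Sigma_{\rho,z} \right| - \log|\Sigma| \right|
\le \tfrac12 \left| \E \log\left| \hat\Sigma_{\rho,z} \right| - \log|\Sigma_z| \right| + \tfrac12 \log\frac{|\Sigma_z|}{|\Sigma|},
\]
where the second term is exactly the projection contribution in the claim (nonnegative, since projecting onto $\mathcal{S}(z)$ only raises eigenvalues, so $|\Sigma_z| \ge |\Sigma|$ and the constant $C$ absorbs the $\tfrac12$). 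It remains to bound the first, estimation, term by $O\!\left( D/(z\sqrt n) \right)$.

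For this I would assemble three facts. (i) By Theorem~\ref{thm:kruskal}, $\Sigma = 2\sin\!\left(\tfrac\pi6 \rho\right)$ entrywise, so $\hat\Sigma_\rho$ is a plug-in estimate through the Lipschitz map $x \mapsto 2\sin\!\left(\tfrac\pi6 x\right)$, giving $\|\hat\Sigma_\rho - \Sigma\|_F \le \tfrac\pi3 \|\hat\rho - \rho\|_F$. (ii) Projection onto the convex cone $\mathcal{S}(z)$ is nonexpansive in Frobenius norm, so $\|\hat\Sigma_{\rho,z} - \Sigma_z\|_F \le \|\hat\Sigma_\rho - \Sigma\|_F$. (iii) On $\mathcal{S}(z)$ the map $A \mapsto \log|A|$ has gradient $A\inv$ with $\|A\inv\|_2 \le 1/z$, hence is Lipschitz. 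The remaining statistical input is a per-entry rate for Spearman's rho: each $\hat\rho_{jk}$ is a smooth V-statistic-type functional of the empirical marginal CDFs, and---using the marginal-transformation invariance of $\rho$, so that only the latent Gaussian copula matters---one obtains $\E[(\hat\rho_{jk} - \rho_{jk})^2] = O(1/n)$ and bias $O(1/n)$, uniformly in $j,k$. Summing the $O(D^2)$ off-diagonal entries gives $\E\|\hat\rho - \rho\|_F^2 = O(D^2/n)$, so $\E\|\hat\rho - \rho\|_F = O(D/\sqrt n)$.

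Chaining (i)--(iii) pathwise bounds the estimation term by $\tfrac{\sqrt D}{z}\,\E\|\hat\Sigma_\rho - \Sigma\|_F = O\!\left( D^{3/2}/(z\sqrt n)\right)$, which is a factor $\sqrt D$ too large, because the Frobenius-norm Lipschitz constant of $\log|\cdot|$ on $\mathcal{S}(z)$ is $\sqrt D/z$. To recover the linear dependence on $D$ I would not bound the log-determinant difference pathwise but expand it in expectation: with $N := \Sigma_z^{-1/2}(\hat\Sigma_{\rho,z} - \Sigma_z)\Sigma_z^{-1/2}$, the projection forces the eigenvalues of $I + N$ to stay bounded below by $z/\|\Sigma_z\|_2 > 0$, so $\left| \log|I+N| - \tr N \right| \le c\,\|N\|_F^2$ for a constant $c$ depending only on the spectrum. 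The first-order term $\E\tr N$ is driven by the $O(1/n)$ per-entry bias and is lower order, while the quadratic term is at most $\tfrac{c}{z^2}\E\|\hat\Sigma_\rho - \Sigma\|_F^2 = O\!\left( D^2/(z^2 n)\right) = O\!\left( (D/(z\sqrt n))^2 \right)$, which is $O(D/(z\sqrt n))$ in the regime $D \lesssim z\sqrt n$ where the bound is informative.

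The main obstacle is the eigenvalue projection. Because $A \mapsto A_z$ is nonsmooth at the threshold $z$, its contribution to the bias cannot simply be Taylor-expanded and must be isolated: when $z < \lambda_D(\Sigma)$ the projection is inactive with overwhelming probability (as $\hat\Sigma_\rho \to \Sigma$ in spectral norm), so it contributes negligibly, whereas near the boundary $z = \lambda_D(\Sigma)$ the kink genuinely biases eigenvalues that sit close to $z$. Controlling this, together with verifying that the Taylor remainder and the rare event $\{\|N\|_2 \text{ large}\}$ contribute negligibly, is where the real work lies; the rest reduces to the standard Spearman concentration estimate and the two elementary Lipschitz/nonexpansiveness facts above.
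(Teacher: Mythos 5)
Your decomposition is identical to the paper's (triangle inequality into an estimation term and the projection term $\log(|\Sigma_z|/|\Sigma|)$), and your ingredients (i)--(iii) and the Spearman rates are all correct. The genuine gap is exactly the step you defer as ``where the real work lies'': the first-order term $\E \tr N$ is \emph{not} lower order, because $\E[\hat\Sigma_{\rho,z}] - \Sigma_z$ is not governed by the $O(1/n)$ per-entry bias of $\hat\rho$ alone --- it also contains the systematic projection shift $\E[\hat\Sigma_{\rho,z} - \hat\Sigma_\rho] \succeq 0$, which is one-sided and can be of order $D^{3/2}/\sqrt{n}$. Concretely, take $\Sigma = I_D$ and $z = 1$, so that $\Sigma_z = \Sigma$ and the claimed bound is $C D/\sqrt{n}$. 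Since the diagonal of $\hat\Sigma_\rho$ is exactly $1$, we have $\tr(\hat\Sigma_\rho - I_D) = 0$, so positive and negative eigenvalue deviations balance:
\[
\sum_{j} \bigl(1 - \lambda_j(\hat\Sigma_\rho)\bigr)_+
= \frac12 \sum_{j} \bigl|\lambda_j(\hat\Sigma_\rho) - 1\bigr|
\;\ge\; \frac{\|\hat\Sigma_\rho - I_D\|_F^2}{2\,\|\hat\Sigma_\rho - I_D\|_2}.
\]
The numerator has expectation $\asymp D^2/n$ (per-entry variance $\asymp 1/n$), while the denominator is $O(\sqrt{D/n})$ with high probability (this is precisely the spectral-norm bound of Mitra--Zhang that the paper cites), so the sum is $\gtrsim D^{3/2}/\sqrt{n}$. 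After projection every term $\log\max\{1,\lambda_j(\hat\Sigma_\rho)\}$ is nonnegative, so no cancellation is possible, and since the quadratic corrections are only $O(D^2/n)$ the bias itself satisfies $|\E[\hat I_{\rho,1}] - I| \gtrsim D^{3/2}/\sqrt{n}$ for large $D$. So the obstacle you flagged is not merely technical: the trace term genuinely dominates, and no refinement of your expansion can reduce it to $D/(z\sqrt{n})$. (Your other caveat --- that the quadratic term is $O(D/(z\sqrt{n}))$ only in the regime $D \lesssim z\sqrt{n}$ --- is likewise a real weakening, since the proposition asserts the bound for every $z > 0$.)

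For comparison, the paper's proof uses your same decomposition but claims the linear-in-$D$ rate via two ingredients you avoided: its mean-value lemma asserting $\left|\log|A| - \log|B|\right| \le \frac{1}{\lambda}\|A - B\|_F$ with $\lambda = \min\{\lambda_D(A), \lambda_D(B)\}$, combined with $\|\cdot\|_F \le \sqrt{D}\|\cdot\|_2$ and the Mitra--Zhang bound $\E\|\hat\Sigma_z - \Sigma_z\|_2 \lesssim \|\Sigma\|_2 \sqrt{D/n}$. But that lemma is false: its proof bounds $|\tr(C\inv(A-B))|$ by $\|C\inv\|_2 \|A-B\|_F$, whereas the valid H\"older pairing is $\|C\inv\|_2\|A - B\|_{*}$ (nuclear norm); already $A = 2I_D$, $B = I_D$ gives $\left|\log|A| - \log|B|\right| = D \log 2$ versus $\frac{1}{\lambda}\|A - B\|_F = \sqrt{D}$. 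The correct Frobenius Lipschitz constant of $\log|\cdot|$ on $\mathcal{S}(z)$ is the $\sqrt{D}/z$ you computed, and with it the paper's own chain of inequalities also yields only $D^{3/2}/(z\sqrt{n})$. In short: your attempt is incomplete at the projection/trace term, but your diagnosis that the pathwise route loses a factor $\sqrt{D}$ is sound --- it is an obstruction the paper steps over with an invalid inequality, and the example above indicates the stated $D/(z\sqrt{n})$ bias rate is not merely unproven but fails to hold uniformly in $z$.
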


The first term of the bias stems from nonlinearity of the log-determinant function in Equation~\ref{eq:gaussian_MI}, which we analyze via Taylor expansion. The second term,
\[\log \frac{|\Sigma_z|}{|\Sigma|}
  = \sum_{\lambda_j(\Sigma) < z} \log \left( \frac{z}{\lambda_j(\Sigma)} \right),\]
is due to the regularization step and is actually, but is difficult to simplify or bound without further assumptions on the spectrum of $\Sigma$ and a choice of $z$, which we discuss later. We now turn to bounding the variance of $\hat I_{\rho,z}$. We first provide an exponential concentration inequality for $\hat I_{\rho,z}$ around its expectation, based on McDiarmid's inequality:
\begin{proposition}
Suppose $X_1,...,X_n \stackrel{i.i.d.}{\sim} \mathcal{NPN}(\Sigma;f)$. Then, for any $z,\e > 0$,
\[\pr \left[
    \left|
      \hat I_{\rho,z} - \E \left[ \hat I_{\rho,z} \right]
    \right| > \e
  \right]
  = 2 \exp \left(
    - \frac{n z^2\e^2}{18 \pi^2 D^2}
  \right).\]
\label{prop:I_rho_exponential_concentration}
\end{proposition}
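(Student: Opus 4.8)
The plan is to apply McDiarmid's bounded-differences inequality to $\hat I_{\rho,z}$ viewed as a deterministic function $g(X_1,\dots,X_n)$ of the samples: if replacing any single argument $X_i$ by an arbitrary $X_i'$ changes $g$ by at most $c$, then $\pr[|g - \E[g]| > \e] \le 2\exp(-2\e^2/(nc^2))$. Matching the stated bound requires the uniform bounded-difference constant $c = \frac{6\pi D}{nz}$, since then $nc^2 = 36\pi^2 D^2/(nz^2)$ and $2\e^2/(nc^2) = nz^2\e^2/(18\pi^2 D^2)$. So the whole task reduces to the deterministic claim that swapping one sample perturbs $\hat I_{\rho,z}$ by at most $\frac{6\pi D}{nz}$; I write $\Delta(\cdot)$ for the change in a quantity under such a swap.

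I would establish this by composing estimates along the pipeline $X \mapsto \hat\rho \mapsto \hat\Sigma_\rho \mapsto \hat\Sigma_{\rho,z} \mapsto \hat I_{\rho,z}$. Write $\|A\|_* = \sum_l |\lambda_l(A)|$ for the trace (nuclear) norm of a symmetric $A$. First, $\hat I_{\rho,z} = -\frac12 \sum_l \log \lambda_l(\hat\Sigma_{\rho,z})$, and every eigenvalue of $\hat\Sigma_{\rho,z}$ is at least $z$, so $\log$ is $\frac1z$-Lipschitz on the relevant range; combined with the fact that the regularization acts as the $1$-Lipschitz eigenvalue-thresholding $\lambda \mapsto \max(\lambda,z)$ and with the Lidskii--Wielandt inequality $\sum_l |\lambda_l^\downarrow(A)-\lambda_l^\downarrow(B)| \le \|A-B\|_*$, this yields $|\Delta \hat I_{\rho,z}| \le \frac{1}{2z}\sum_l |\Delta\lambda_l(\hat\Sigma_{\rho,z})| \le \frac{1}{2z}\|\Delta\hat\Sigma_\rho\|_*$. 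Next, since the entrywise map $x \mapsto 2\sin(\frac\pi6 x)$ has derivative $\frac\pi3\cos(\frac\pi6 x)$ bounded by $\frac\pi3$, each entry satisfies $|\Delta(\hat\Sigma_\rho)_{j,k}| \le \frac\pi3 |\Delta\hat\rho_{j,k}|$. It therefore remains to control the single-sample sensitivity of Spearman's rank-correlation matrix $\hat\rho$ and to convert the resulting entrywise bounds into the trace-norm bound $\|\Delta\hat\Sigma_\rho\|_* \lesssim D/n$.

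For the sensitivity of $\hat\rho$, the starting point is the closed form $\hat\rho_{j,k} = \frac{12}{n(n^2-1)}\sum_{i}(R_{i,j}-\frac{n+1}2)(R_{i,k}-\frac{n+1}2)$. Replacing one sample moves its rank in each column and shifts the intervening ranks by $\pm 1$, so in column $j$ the total variation of the rank change obeys $\sum_i |\Delta R_{i,j}| \le 2(n-1)$, while the other factor is bounded by $\frac{n-1}{2}$; this gives an $O(1/n)$ bound on each entry $|\Delta\hat\rho_{j,k}|$. The delicate point is assembling these entrywise bounds with the correct power of $D$. A naive conversion --- bounding $\|\Delta\hat\Sigma_\rho\|_*$ by $\sqrt{D}\,\|\Delta\hat\Sigma_\rho\|_F$ and the Frobenius norm by $D$ times the entrywise maximum --- overshoots to $D^{3/2}/n$, and this is unavoidable entrywise, since $\|\Delta\hat\rho\|_F$ can genuinely be of order $D/n$ (e.g. a single swap that simultaneously disrupts the correlations of two balanced blocks of coordinates perturbs $\sim D^2$ entries by $\Theta(1/n)$ each). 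The gain of the missing $\sqrt{D}$ must instead come from the low effective rank of such single-sample rank-correlation perturbations: writing $\hat\rho$ as a Gram matrix $\frac{12}{n(n^2-1)}C^{T}C$ in the centered-rank matrix $C$, one has $\Delta\hat\rho = \frac{12}{n(n^2-1)}(\Delta C^{T}C + C^{T}\Delta C + \Delta C^{T}\Delta C)$, whose large entries are forced (by the coupling through a single perturbed row) to align into a few dominant spectral directions, so that after the entrywise sine step $\|\Delta\hat\Sigma_\rho\|_*$ remains of order $D/n$ rather than $D^{3/2}/n$.

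The main obstacle is precisely this last spectral bookkeeping: converting the honest $\Theta(1/n)$ per-entry and $\Theta(D/n)$ Frobenius sensitivities of $\hat\rho$ into a trace-norm bound $\|\Delta\hat\Sigma_\rho\|_* \lesssim D/n$ for the sine-transformed matrix, which is what distinguishes the claimed $D^2$ in the exponent from the $D^3$ that a norm-agnostic argument produces. The McDiarmid step and the log-determinant and projection estimates are routine; the real work is in showing that the perturbation induced by one sample, after passing through the entrywise transform, has its mass concentrated in $O(D)$ (rather than $O(D^{3/2})$) units of trace norm. Once $\|\Delta\hat\Sigma_\rho\|_* \le \frac{12\pi D}{n}$ is established, the chain gives $|\Delta\hat I_{\rho,z}| \le \frac{1}{2z}\cdot\frac{12\pi D}{n} = \frac{6\pi D}{nz}$, and McDiarmid's inequality yields the stated concentration bound directly.
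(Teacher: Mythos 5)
Your top-level reduction coincides with the paper's: apply McDiarmid's inequality with a uniform bounded-difference constant $c = 6\pi D/(nz)$, which indeed produces the stated exponent $nz^2\varepsilon^2/(18\pi^2D^2)$. Your eigenvalue chain --- $\log$ is $\tfrac1z$-Lipschitz on $[z,\infty)$, the regularization acts on eigenvalues as the $1$-Lipschitz thresholding $\lambda\mapsto\max\{\lambda,z\}$, and Lidskii--Mirsky converts summed eigenvalue differences into $\|\hat\Sigma_\rho-\hat\Sigma_\rho^{(1)}\|_*$ --- is valid, and is actually a cleaner treatment of the projection than the paper's Frobenius-contraction step. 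The genuine gap is the step you yourself flag as the main obstacle: you never prove $\|\hat\Sigma_\rho-\hat\Sigma_\rho^{(1)}\|_*\lesssim D/n$. What you do establish (entrywise sensitivity $O(1/n)$ of $\hat\rho$, hence $\|\hat\rho-\hat\rho^{(1)}\|_F = O(D/n)$) yields only $O(D^{3/2}/n)$ in trace norm, i.e.\ a $D^3$ rather than $D^2$ exponent; the ``low effective rank'' discussion of the Gram structure is a heuristic rather than an argument, and your assertion that this structure survives the entrywise sine map is precisely the nontrivial point, since entrywise maps do not respect spectral decompositions. As written, the proposal does not establish the proposition.

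You should know, however, that the difficulty you isolated is real: the paper's own proof closes it by invoking Lemma~\ref{lemma:log_det_mean_value}, which is false as stated. That lemma claims $\left|\log|A|-\log|B|\right|\le\frac1\lambda\|A-B\|_F$, justified by bounding the trace inner product by the product of operator and Frobenius norms; taking $A=2I_D$, $B=I_D$ gives left side $D\log 2$ and right side $\sqrt D$, a contradiction for $D\ge 3$. Trace duality only gives $\left|\tr\left(C^{-1}(A-B)\right)\right|\le\|C^{-1}\|_2\|A-B\|_*$ --- the nuclear norm your approach calls for --- or $\|C^{-1}\|_F\|A-B\|_F$, which costs exactly the $\sqrt D$ you refuse to pay; so the paper's argument, corrected, also yields only a $D^3$ exponent. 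Your route is the right way to rescue the claimed $D^2$ rate, and it can be completed: (i) writing $\hat\rho=\tfrac{12}{n(n^2-1)}C^TC$ with $C$ the centered rank matrix, a single-sample swap gives $\Delta C=e_1v^T+E$ with $|v_j|\le n-1$ and $E$ entrywise in $\{-1,0,1\}$ with at most $|v_j|$ nonzero entries per column, whence the cross and quadratic terms are $O(D/n)$ in trace norm via $\|uv^T\|_*=\|u\|_2\|v\|_2$ and $\|AB\|_*\le\|A\|_F\|B\|_F$; (ii) for the sine step, $\hat\Sigma_\rho-\hat\Sigma_\rho^{(1)}=\tfrac\pi3\,M\circ(\hat\rho-\hat\rho^{(1)})$ where $M$ is an entrywise integral of $\cos(\tfrac\pi6(\cdot))$ evaluated at convex combinations of the two PSD, unit-diagonal Spearman matrices; expanding the cosine in Hadamard powers and applying the Schur product theorem bounds the Schur-multiplier norm of $M$ on the trace class by $\cosh(\pi/6)$. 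This completes your argument and gives the proposition's $D^2/(z^2n)$ scaling, though with somewhat worse absolute constants than the ones stated.
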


Such exponential concentration bounds are useful when one wants to simultaneously bound the error of multiple uses of an estimator, and hence we present it separately as it may be independently useful. However, for the purpose of understanding convergence rates, we are more interested in the variance bound that follows as an easy corollary:

\begin{corollary}
Suppose $X_1,...,X_n \stackrel{i.i.d.}{\sim} \mathcal{NPN}(\Sigma;f)$. Then, for any $z > 0$, the variance of $\hat I_{\rho,z}$ is at most
\[\Var \left[ \hat I_{\rho,z} \right]
  \leq \frac{36 \pi^2 D^2}{z^2 n}.\]
\label{prop:I_rho_variance_bound}
\end{corollary}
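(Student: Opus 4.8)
The plan is to obtain the variance bound as an immediate consequence of the exponential concentration inequality in Proposition~\ref{prop:I_rho_exponential_concentration}, exploiting the fact that a sub-Gaussian-type tail controls the second moment. Writing $Y := \hat I_{\rho,z}$ and $\mu := \E[Y]$, I would begin from the layer-cake (tail-integral) identity for the centered second moment,
\[
\Var[Y] = \E\left[(Y - \mu)^2\right] = \int_0^\infty \pr\left[(Y - \mu)^2 > t\right]\, dt = \int_0^\infty \pr\left[|Y - \mu| > \sqrt{t}\right]\, dt,
\]
which holds whenever $Y$ has finite variance. Finiteness is easy to check here: the entries of $\hat\Sigma_\rho = 2\sin(\tfrac{\pi}{6}\hat\rho)$ lie in $[-1,1]$, so its eigenvalues are bounded in magnitude, and after projection onto $\mathcal{S}(z)$ they lie in $[z,\max\{z,D\}]$; hence $\hat I_{\rho,z} = -\tfrac{1}{2}\log|\hat\Sigma_{\rho,z}|$ is a bounded random variable.

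Next, I would substitute $\e = \sqrt{t}$ into the tail bound of Proposition~\ref{prop:I_rho_exponential_concentration}, obtaining
\[
\pr\left[|Y - \mu| > \sqrt{t}\right] \le 2\exp\left(-\frac{n z^2 t}{18 \pi^2 D^2}\right)
\]
for every $t > 0$. Setting $a := \frac{n z^2}{18\pi^2 D^2}$, the variance is then dominated by the elementary integral $\int_0^\infty 2 e^{-at}\, dt = 2/a$, which equals $\frac{36\pi^2 D^2}{z^2 n}$, matching the claimed constant exactly; note that integrating the exponential is precisely what converts the factor $18$ in the exponent into the factor $36$ in the final bound.

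There is essentially no serious obstacle in this step, which is why it is phrased as a corollary rather than a standalone proposition: the genuine work lies in the McDiarmid-based argument behind Proposition~\ref{prop:I_rho_exponential_concentration}, and the present step merely converts that tail estimate into a second-moment bound. The only points deserving a small amount of care are (i) justifying that $\Var[Y]$ is finite so that the layer-cake identity is valid, and (ii) tracking the constant through the integration to confirm the bound is exactly $36\pi^2 D^2/(z^2 n)$ and not merely of that order.
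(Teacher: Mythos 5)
Your proposal is correct and takes essentially the same route as the paper: the paper also obtains this variance bound by converting the McDiarmid tail bound of Proposition~\ref{prop:I_rho_exponential_concentration} into a second-moment bound, and your tail integration $\int_0^\infty 2e^{-at}\,dt = 2/a$ with $a = nz^2/(18\pi^2D^2)$ reproduces the stated constant $36\pi^2D^2/(z^2n)$ exactly. One minor remark: the layer-cake identity $\E[Z] = \int_0^\infty \pr[Z > t]\,dt$ holds for \emph{any} nonnegative random variable by Tonelli's theorem, so your preliminary boundedness check, while harmless, is not actually needed for the argument to be valid.
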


Given these bias and variance bounds, a bound on the MSE of $\hat I_{\rho,z}$ follows via the usual bias-variance decomposition:
\begin{theorem}
Suppose $X \sim \mathcal{NPN}(\Sigma;f)$. Then, there exists a constant $C$ such that
\begin{align}
\E \left[ \left( \hat I_{\rho,z} - I \right)^2 \right]
\leq C \left( \frac{D^2}{z^2n} + \log^2 \frac{|\Sigma_z|}{|\Sigma|} \right).
\label{ineq:general_I_rho_MSE_bound}
\end{align}
\label{thm:general_I_rho_MSE_bound}
\end{theorem}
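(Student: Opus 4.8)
The plan is to invoke the standard bias--variance decomposition
\[
\E \left[ \left( \hat I_{\rho,z} - I \right)^2 \right]
= \Var \left[ \hat I_{\rho,z} \right]
+ \left( \E \left[ \hat I_{\rho,z} \right] - I \right)^2,
\]
which reduces the theorem to combining the variance bound of Corollary~\ref{prop:I_rho_variance_bound} with the square of the bias bound of Proposition~\ref{prop:I_rho_bias_bound}. Since both of these component results are already available, the remaining work is purely bookkeeping with constants, and I do not expect any genuine obstacle: all of the real difficulty was absorbed into establishing the two component bounds separately.

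First I would substitute the variance bound $\Var[\hat I_{\rho,z}] \leq 36\pi^2 D^2 / (z^2 n)$ from Corollary~\ref{prop:I_rho_variance_bound}. This is already exactly of the form $C_1 D^2 / (z^2 n)$ and so contributes directly to the first term of \eqref{ineq:general_I_rho_MSE_bound}. Next I would handle the squared bias. Proposition~\ref{prop:I_rho_bias_bound} supplies a constant $C_2 > 0$ with
\[
\left| \E \left[ \hat I_{\rho,z} \right] - I \right|
\leq C_2 \left( \frac{D}{z \sqrt{n}} + \log \frac{|\Sigma_z|}{|\Sigma|} \right),
\]
so squaring both sides and applying the elementary inequality $(a + b)^2 \leq 2a^2 + 2b^2$ (with $a = D/(z\sqrt{n})$ and $b = \log(|\Sigma_z|/|\Sigma|)$) yields a squared-bias bound of the form $2 C_2^2 \left( D^2 / (z^2 n) + \log^2 (|\Sigma_z|/|\Sigma|) \right)$. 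The only point requiring mild care is that squaring a sum naively produces a cross term $2ab$; the $(a+b)^2 \leq 2a^2 + 2b^2$ step is precisely what keeps the result within the stated two-term structure without introducing such a term.

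Finally I would add the two contributions. The total is at most
\[
\frac{C_1 D^2}{z^2 n}
+ 2 C_2^2 \left( \frac{D^2}{z^2 n} + \log^2 \frac{|\Sigma_z|}{|\Sigma|} \right)
= \left( C_1 + 2 C_2^2 \right) \frac{D^2}{z^2 n}
+ 2 C_2^2 \log^2 \frac{|\Sigma_z|}{|\Sigma|}.
\]
Setting $C := \max \{ C_1 + 2 C_2^2, \, 2 C_2^2 \}$ then gives exactly the bound $C \left( D^2 / (z^2 n) + \log^2 (|\Sigma_z|/|\Sigma|) \right)$ claimed in \eqref{ineq:general_I_rho_MSE_bound}, completing the argument.
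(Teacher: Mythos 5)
Your proposal is correct and takes essentially the same route as the paper: the paper obtains Theorem~\ref{thm:general_I_rho_MSE_bound} exactly by combining the bias bound of Proposition~\ref{prop:I_rho_bias_bound} with the variance bound of Corollary~\ref{prop:I_rho_variance_bound} via the standard bias--variance decomposition, absorbing constants just as you do.
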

A natural question is now how to optimally select the regularization parameter $z$. While the bound \eqref{ineq:general_I_rho_MSE_bound} is clearly convex in $z$, it depends crucially on the unknown spectrum of $\Sigma$, and, in particular, on the smallest eigenvalues of $\Sigma$. As a result, it is difficult to choose $z$ optimally in general, but we we can do so for certain common subclasses of covariance matrices. For example, if $\Sigma$ is Toeplitz or bandable (i.e., for some $c \in (0,1)$, all $|\Sigma_{i,j}| \leq c^{|i - j|}$), then the smallest eigenvalue of $\Sigma$ can be bounded below~\citep{cai12adaptiveCovarianceMatrix}. When $\Sigma$ is bandable, as we show in the Appendix, this bound can be independent of $D$. In these cases, the following somewhat simpler MSE bound can be used:
\begin{corollary}
Suppose $X \sim \mathcal{NPN}(\Sigma;f)$, and suppose $z \leq \lambda_D(\Sigma)$. Then, there exists a constant $C > 0$ such that
\[\E \left[ \left( \hat I_{\rho,z} - I \right)^2 \right] \leq \frac{CD^2}{z^2n}.\]
\label{corr:specific_I_rho_MSE_bound}
\end{corollary}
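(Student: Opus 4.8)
The plan is to derive this corollary directly from Theorem \ref{thm:general_I_rho_MSE_bound} by showing that the added hypothesis $z \leq \lambda_D(\Sigma)$ causes the regularization term $\log^2 \frac{|\Sigma_z|}{|\Sigma|}$ to vanish identically. The entire content of the corollary is this one observation; once it is made, the result follows immediately from the general MSE bound already established.

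First I would recall the definition of the projection $\Sigma_z$. Writing the eigendecomposition $\Sigma = Q \Lambda Q\inv$ with diagonal entries $\Lambda_{j,j} = \lambda_j(\Sigma)$, the projection onto $\mathcal{S}(z)$ is $\Sigma_z = Q \Lambda_z Q\inv$, where $(\Lambda_z)_{j,j} = \max\{z, \lambda_j(\Sigma)\}$. The key step is then to use the hypothesis: since $z \leq \lambda_D(\Sigma) \leq \lambda_j(\Sigma)$ for every $j \in [D]$ (the $\lambda_j$ being arranged in decreasing order, so $\lambda_D$ is the smallest), we have $\max\{z, \lambda_j(\Sigma)\} = \lambda_j(\Sigma)$ for all $j$. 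Hence $\Lambda_z = \Lambda$ and therefore $\Sigma_z = \Sigma$. Equivalently, the hypothesis simply says $\Sigma \in \mathcal{S}(z)$ already, so its Frobenius projection onto $\mathcal{S}(z)$ is itself.

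With $\Sigma_z = \Sigma$ established, I would conclude that $|\Sigma_z| = |\Sigma|$, so $\log \frac{|\Sigma_z|}{|\Sigma|} = \log 1 = 0$, and consequently the second summand in the bound \eqref{ineq:general_I_rho_MSE_bound} of Theorem \ref{thm:general_I_rho_MSE_bound} is zero. Substituting this into the theorem yields
\[
\E \left[ \left( \hat I_{\rho,z} - I \right)^2 \right]
\leq C \left( \frac{D^2}{z^2 n} + 0 \right)
= \frac{C D^2}{z^2 n},
\]
with the same constant $C$ furnished by the theorem, which completes the argument.

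Honestly, there is no substantive obstacle here: the analytic work (the Taylor-expansion bias analysis and the McDiarmid-based variance bound) is entirely contained in the preceding propositions and in Theorem \ref{thm:general_I_rho_MSE_bound}. The only thing to verify carefully is the elementary but essential fact that choosing $z$ below the spectrum of $\Sigma$ turns the regularization step into a no-op, so that the bias contribution from projection disappears. The mild care required is just to confirm the ordering convention on the eigenvalues ($\lambda_D(\Sigma)$ being the minimum) so that $z \le \lambda_D(\Sigma)$ indeed dominates the comparison for \emph{every} eigenvalue, not merely the smallest.
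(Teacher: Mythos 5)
Your proof is correct and matches the paper's intended argument exactly: the hypothesis $z \leq \lambda_D(\Sigma)$ makes the projection a no-op ($\Sigma_z = \Sigma$), so the regularization term $\log^2 \frac{|\Sigma_z|}{|\Sigma|}$ in Theorem \ref{thm:general_I_rho_MSE_bound} vanishes and the simpler bound follows immediately. The paper treats this as immediate (it gives no separate proof, only the remark that $\log \frac{|\Sigma_z|}{|\Sigma|} = \sum_{\lambda_j(\Sigma) < z} \log(z/\lambda_j(\Sigma))$, which is an empty sum under the hypothesis), and your verification of the eigenvalue-ordering convention is the right point of care.
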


\section{Lower Bounds in terms of $\Sigma$}
\label{sec:Sigma_lower_bound}
When the data $X_1,...,X_n \stackrel{i.i.d}{\sim} \mathcal{N}(0,\Sigma)$ are truly Gaussian, using the plug-in estimator
\[\textstyle \hat I = -\frac{1}{2} \log \left| \hat \Sigma \right|
\quad \mbox{ (where } \quad
\hat\Sigma = \frac{1}{n} \sum_{i = 1}^n X_i X_i^T\]
is the empirical covariance matrix), \citet{cai15logDetCov} showed that the distribution of $\hat I - I$ is independent of the true correlation matrix $\Sigma$. This follows from  the ``stability'' of Gaussians (i.e., that nonsingular linear transformations of Gaussian random variables are Gaussian). In particular,
\[\hat I - I = \log|\hat\Sigma| - \log|\Sigma| = \log|\Sigma^{-1/2}\hat\Sigma\Sigma^{-1/2}|,\] and $\Sigma^{-1/2}\hat\Sigma\Sigma^{-1/2}$ has the same distribution as $\log\hat\Sigma$ does in the special case that $\Sigma = I_D$ is the identity. This property is both somewhat surprising, given that $I \to \infty$ as $|\Sigma| \to 0$, and useful, leading to a tight analysis of the error of $\hat I$ and confidence intervals that do not depend on $\Sigma$. 

It would be convenient if any nonparanormal information estimators satisfied this property. Unfortunately, the main result of this section is a negative one, showing that this property is unlikely to hold without additional assumptions:

\begin{proposition}
Consider the $2$-dimensional case
\begin{equation}
X_1,...,X_n \stackrel{i.i.d}{\sim} \mathcal{N}(0,\Sigma), \quad \text{ with } \quad
\Sigma = \begin{bmatrix}
1 & \sigma \\
\sigma & 1
\end{bmatrix},
\label{eq:2D_Sigma}
\end{equation}
and let $\sigma_* \in (0,1)$. Suppose an estimator $\hat I = \hat I(R)$ of $I_\sigma = -\frac{1}{2}\log(1 - \sigma^2)$ is a function of the empirical rank matrix $R \in \N^{n \times 2}$ of $X$. Then, there exists a constant $C > 0$, depending only $n$, such that the worst-case MSE of $\hat I$ over $\sigma \in (0,\sigma_*)$ satisfies
\begin{align*}
\sup_{\sigma \in (0,\sigma^*)} \E \left[ \left( \hat I(R) - I_\sigma \right)^2 \right]
& \geq \frac{1}{64} \left( C - \log(1 - \sigma_*^2) \right)^2
\end{align*}
\label{prop:sigma_lower_bound}
\end{proposition}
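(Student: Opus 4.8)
The plan is to exhibit two values of $\sigma$ that are far apart in mutual information yet induce almost the same law on the rank matrix $R$, so that no function of $R$ can track $I_\sigma$. The device is the \emph{perfect alignment} event $A$, on which the two columns of $R$ induce the same ordering of the $n$ samples. The key observation is an invariance: by exchangeability of the i.i.d.\ sample, conditioned on $A$ the common ordering is uniform over all $n!$ permutations \emph{regardless of} $\sigma$, so the conditional law $\mu$ of $\hat I(R)$ given $A$ is the same for every $\sigma$. Writing $m := \E_\mu[\hat I]$ (finite, else the MSE is already infinite and the claim is trivial), Jensen's inequality gives, for every $\sigma$,
\[
  \E_\sigma\!\left[\left(\hat I - I_\sigma\right)^2\right]
  \;\ge\; \pr_\sigma(A)\,\E_\mu\!\left[\left(\hat I - I_\sigma\right)^2\right]
  \;\ge\; \pr_\sigma(A)\,(m - I_\sigma)^2 .
\]
Equivalently, this is a two-point Le Cam bound in which $1 - \|P_{\sigma_1} - P_{\sigma_2}\|_{\mathrm{TV}} \ge \pr_{\sigma_1}(A)$ (with $P_\sigma$ the law of $R$ under correlation $\sigma$), because the two laws share the same conditional distribution on $A$.

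Next I would bound $q(\sigma) := \pr_\sigma(A)$ below by a constant on a suitable range. The event $A$ fails exactly when some pair of samples is discordant, and a single pair of bivariate Gaussians with correlation $\sigma$ is discordant with probability $\tfrac12 - \tfrac1\pi\arcsin\sigma$, since Kendall's $\tau = \tfrac2\pi\arcsin\sigma$ by Theorem~\ref{thm:kruskal}. A union bound over the $\binom n2$ pairs then gives $q(\sigma) \ge 1 - \binom n2\!\left(\tfrac12 - \tfrac1\pi\arcsin\sigma\right)$, which increases to $1$ as $\sigma \to 1$. I would fix the threshold correlation $\sigma_1 := \sin\!\left(\tfrac\pi2\big(1 - \tfrac{3}{n(n-1)}\big)\right)$, which depends only on $n$, so that $q(\sigma) \ge \tfrac14$ for all $\sigma \in [\sigma_1, \sigma_*]$. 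Evaluating the previous display at $\sigma = \sigma_1$ and in the limit $\sigma \to \sigma_*$ yields
\[
  \sup_{\sigma \in (0,\sigma_*)} \E_\sigma\!\left[\left(\hat I - I_\sigma\right)^2\right]
  \;\ge\; \tfrac14 \max\!\left\{ (m - I_{\sigma_1})^2,\; (m - I_{\sigma_*})^2 \right\} .
\]
Eliminating the unknown $m$ via $\max\{(m-a)^2,(m-b)^2\} \ge \tfrac14(a-b)^2$ and substituting $I_{\sigma_*} - I_{\sigma_1} = \tfrac12\big(\log(1-\sigma_1^2) - \log(1-\sigma_*^2)\big)$ produces exactly $\tfrac1{64}\big(C - \log(1-\sigma_*^2)\big)^2$, with the additive constant $C = \log(1-\sigma_1^2)$ depending only on $n$.

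The conceptual heart of the argument, and its main obstacle, is the first step: recognizing the alignment event and proving that the conditional law of $R$ given $A$ is \emph{free of} $\sigma$. This invariance is what makes two parameter values with very different $I_\sigma$ indistinguishable from ranks, and it is precisely the property whose failure distinguishes the rank-based setting from the Gaussian plug-in estimator of Section~\ref{sec:Sigma_lower_bound}, whose error law is $\sigma$-free. The remaining steps are routine: the concordance probability is classical and the union bound, though crude, suffices because we only need $q(\sigma)$ bounded below by a fixed constant rather than sharp estimates. Finally I would note that the two points can be chosen as required only once $\sigma_* > \sigma_1$, i.e., in the regime $\lambda_D(\Sigma) = 1 - \sigma_* \to 0$ in which $I_\sigma$ is most sensitive to $\sigma$; there the bound grows like $\tfrac1{64}\log^2(1-\sigma_*^2)$, confirming that nonparanormal information estimation genuinely depends on the spectrum of $\Sigma$.
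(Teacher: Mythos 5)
Your proposal is correct and takes essentially the same route as the paper's own proof: both rest on the perfectly-correlated rank event, the observation that exchangeability of the i.i.d.\ sample makes the conditional law of $R$ on that event uniform over the $n!$ orderings (hence free of $\sigma$), and a two-point indistinguishability argument between a threshold correlation depending only on $n$ and $\sigma \to \sigma_*$, yielding exactly the $\tfrac{1}{64}\left(C - \log(1-\sigma_*^2)\right)^2$ bound. The only differences are executional and in your favor: where the paper simply asserts $\pr[R \in \mathcal{R}_1] \to 1$ as $\sigma \to 1$ and invokes Le Cam's lemma, you quantify the alignment probability explicitly via Kendall's $\tau$ and a union bound and replace the Le Cam citation with a self-contained Jensen/triangle-inequality computation; note also that both proofs implicitly require $\sigma_*$ to exceed a threshold depending on $n$ (so the stated bound is really about the regime $\sigma_* \approx 1$), a restriction you flag explicitly but the paper does not.
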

Clearly, this lower bound tends to $\infty$ as $\sigma \to 1$.
As written, this result lower bounds the error of \emph{rank-based estimators} in the Gaussian case when $\sigma \approx 1$.
However, to the best of our knowledge, all methods for estimating $\Sigma$ in the nonparanormal case are functions of $R$, and prior work~\citep{hoff07extending} has shown that the rank matrix $R$ is a generalized sufficient statistic for $\Sigma$ (and hence for $I$) in the nonparanormal model. Thus, it is reasonable to think of lower bounds for rank-based estimators in the Gaussian case as lower bounds for any estimator in the nonparanormal case.

The proof of this result is based on the simple observation that the rank matrix can take only finitely many values. Hence, as $\sigma \to 1$, $R$ tends to be perfectly correlated, providing little information about $\sigma$, whereas the dependence of the estimand $I_\sigma$ on $\sigma$ increases sharply. This is intuition is formalized in the Appendix
using Le Cam's lemma for lower bounds in two-point parameter estimation problems.

\section{Empirical Results}
\label{sec:empirical}
We compare 5 mutual information estimators:
\begin{itemize}[itemsep=0.0mm,topsep=0mm]
\item
$\hat I$: Gaussian plug-in estimator with bias-correction (see \citet{cai15logDetCov}).
\item
$\hat I_G$: Nonparanormal estimator using Gaussianization.
\item
$\hat I_\rho$: Nonparanormal estimator using Spearman's $\rho$.
\item
$\hat I_\tau$: Nonparanormal estimator using Kendall's $\tau$.
\item
$\hat I_{k\text{NN}}$: Nonparametric estimator using $k$-nearest neighbor ($k$NN) statistics.
\end{itemize}

For $I_\rho$ and $I_\tau$, we used a regularization constant $z = 10^{-3}$. We did not regularize for $I_G$. Although this implies $\pr[I_G~=~\infty]~>~0$, this is extremely unlikely for even moderate values of $n$ and never occurred during our experiment, which all use $n \geq 32$. We will thus omit denoting dependence on $z$. For $I_{k\text{NN}}$, except as noted in Experiment 3, $k = 2$, based on recent analysis \citep{singh16kNNEntropy} suggesting that small values of $k$ are best for estimation.

Sufficient details to reproduce experiments are given in the Appendix,
and MATLAB source code is available at [Omitted for anonymity]. We report MSE based on $1000$ i.i.d. trials of each condition. $95\%$ confidence intervals were consistently smaller than plot markers and hence omitted to avoid cluttering plots. Except as specified otherwise, each experiment had the following basic structure: In each trial, a correlation matrix $\Sigma$ was drawn by normalizing a random covariance matrix from a Wishart distribution, and data $X_1,...,X_n \stackrel{i.i.d.}{\sim} \mathcal{N}(0, \Sigma)$ drawn. All $5$ estimators were computed from $X_1,...,X_n$ and squared error from true mutual information (computed from $\Sigma$) was recorded. Unless specified otherwise, $n = 100$ and $D = 25$.

Since our nonparanormal information estimators are functions of ranks of the data, neither the true mutual information nor our non-paranormal estimators depend on the marginal transformations. Thus, except in Experiment 2, where we show the effects of transforming marginals, and Experiment 3, where we add outliers to the data, we perform all experiments on truly Gaussian data, with the understanding that this setting favors the Gaussian estimator.

All experimental results are displayed in Figure~\ref{fig:main_experimental_results}.

\begin{figure*}[t!]
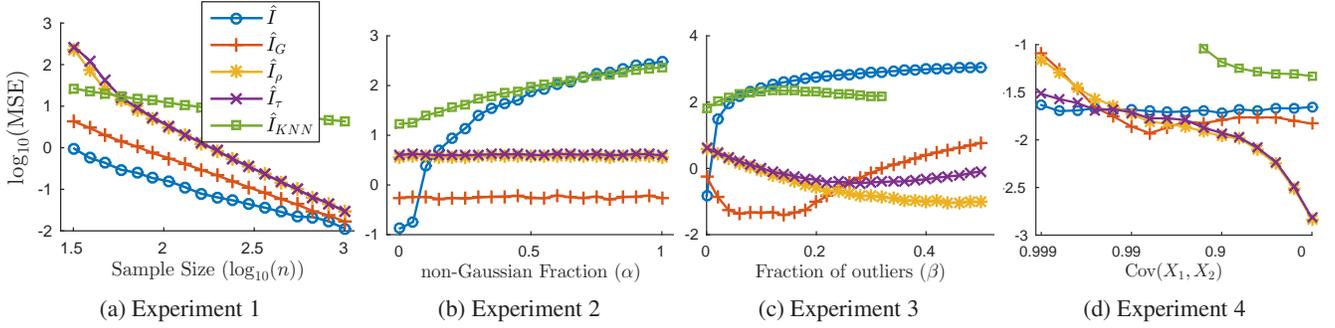

    \centering
    \begin{subfigure}[b]{0.27\textwidth}
        \centering
        \includegraphics[width=\textwidth,trim={0 0 0 0},clip]{fig3_hard.eps}
        \caption{Experiment 1}
        \label{subfig:exp_1}
    \end{subfigure}%
    ~
    \begin{subfigure}[b]{0.24\textwidth}
        \centering
        \includegraphics[width=\textwidth,trim={0mm 0 0 0},clip]{fig2.eps}
        \caption{Experiment 2}
        \label{subfig:exp_2}
    \end{subfigure}%
    ~
    \begin{subfigure}[b]{0.24\textwidth}
        \centering
        \includegraphics[width=\textwidth,trim={0mm 0 0 0},clip]{fig5.eps}
        \caption{Experiment 3}
        \label{subfig:exp_3}
    \end{subfigure}%
    ~
    \begin{subfigure}[b]{0.25\textwidth}
        \centering
        \includegraphics[width=\textwidth,trim={1mm 0 0 0},clip]{fig4.eps}
        \caption{Experiment 4}
        \label{subfig:exp_4}
    \end{subfigure}
    \caption{Plots of $\log_{10}(\text{MSE})$ plotted over (a) log-sample-size $\log_{10}(n)$, (b) fraction $\alpha$ of dimensions with non-Gaussian marginals, (c) fraction $\beta$ of outlier samples in each dimension, and (d) covariance $\Sigma_{1,2} = \Cov(X_1,X_2)$. Note that the $x$-axis in (d) is decreasing.}
\label{fig:main_experimental_results}
\end{figure*}


{\bf Experiment 1 (Dependence on $n$):}
We first show nonparanormal estimators have ``parametric'' $O(n\inv)$ dependence on $n$, unlike $\hat I_{k\text{NN}}$, which converges far more slowly. For large $n$, MSEs of $\hat I_G$, $\hat I_\rho$, and $\hat I_\tau$ are close to that of $\hat I$.

{\bf Experiment 2 (Non-Gaussian Marginals):}
\label{subsec:hat_I_nonrobust}
Next, we show nonparanormal estimators are robust to non-Gaussianity of the marginals, unlike $\hat I$. We applied a nonlinear transformation $f$ to a fraction $\alpha \in [0, 1]$ of dimensions of Gaussian data. That is, we drew $Z_1,...,Z_n \stackrel{i.i.d.}{\sim} \mathcal{N}(0,\Sigma)$ and then used data $X_1,...,X_n$, where
\[X_{i,j} = \left\{
\begin{array}{ll}
T(Z_{i,j}) & \mbox{ if } j < \alpha D \\
Z_{i,j} & \mbox{ if } j \geq \alpha D
\end{array}
\right., \quad \forall i \in [n], j \in [D],\]
for a diffeomorphism $T$. Here, we use $T(z) = e^z$. The Appendix shows similar results for several other $T$. $\hat I$ performs poorly even when $\alpha$ is quite small. Poor performance of $\hat I_{k\text{NN}}$ may be due to discontinuity of the density at $x = 0$.

{\bf Experiment 3 (Outliers):}
We now show that nonparanormal estimators are far more robust to the presence of outliers than $\hat I$ or $\hat I_{k\text{NN}}$. To do this, we added outliers to the data according to the method of \citet{liu12SKEPTIC}. After drawing Gaussian data, we independently select $\lfloor \beta n \rfloor$ samples in each dimension, and replace each i.i.d. uniformly at random from $\{-5,+5\}$. Performance of $\hat I$ degrades rapidly even for small $\beta$. $\hat I_{k\text{NN}}$ can fail for atomic distributions, $\hat I_{k\text{NN}} = \infty$ whenever at least $k$ samples are identical. This mitigate this, we increased $k$ to $20$ and ignored trials where $\hat I_{k\text{NN}} = \infty$, but $\hat I_{k\text{NN}}$ ceased to give any finite estimates when $\beta$ was sufficiently large.

For small values of $\beta$, nonparanormal estimators surprisingly improve. We hypothesize this is due to convexity of the mutual information functional Eq.~\eqref{eq:gaussian_MI} in $\Sigma$. By Jensen's inequality, estimators which plug-in an approximately unbiased estimate $\hat\Sigma$ of $\Sigma$ are biased towards overestimating $I$. Adding random (uncorrelated) noise reduces estimated dependence, moving the estimate closer to the true value.
If this nonlinearity is indeed a major source of bias, it may be possible to derive a von Mises-type bias correction (see \citet{kandasamy15vonMises}) accounting for higher-order terms in the Taylor expansion of the log-determinant.


{\bf Experiment 4 (Dependence on $\Sigma$):}
Here, we verify our results in \hyperref[sec:Sigma_lower_bound]{Section~\ref{sec:Sigma_lower_bound}} showing that MSE of rank-based estimators approaches $\infty$ as $|\Sigma| \to 0$, while MSE of $\hat I$ is independent of $\Sigma$. Here, we set $D = 2$ and $\Sigma$ as in Eq.~\eqref{eq:2D_Sigma}, varying $\sigma \in [0,1]$. Indeed, the MSE of $\hat I$ does not change, while the MSEs of $\hat I_G$, $\hat I_\rho$, and $\hat I_\tau$ all increase as $\sigma \to 1$. This increase seems mild in practice, with performance worse than of $\hat I$ only when $\sigma > 0.99$. $\hat I_\tau$ appears to perform far better than $\hat I_G$ and $\hat I_\rho$ in this regime.
Performance of $I_{k\text{NN}}$ degrades far more quickly as $\sigma \to 1$. This phenomenon is explored by \citet{gao15efficient}, who lower bound error of $I_{k\text{NN}}$ in the presence of strong dependencies, and proposed a correction to improve performance in this case.

It is also interesting that errors of $\hat I_{\rho}$ and $\hat I_{\tau}$ drop as $\sigma \to 0$. This is likely because, in this regime, the main source of error is the variance of $\hat\rho$ and $\hat\tau$ (as $-\log(1 - \sigma^2) \approx \sigma^2$ when $\sigma \approx 0$). When $n \to \infty$ and $D$ is fixed, both $2\sin(\pi\hat\rho/6)$ and $\sin(\pi\hat\tau/2)$ are asymptotically normal estimates of $\sigma$, with asymptotic variances proportional to $(1 - \sigma^2)^2$~\citep{klaassen97bivariateNormalCopula}. By the delta method, since $\frac{dI}{d\sigma} = \frac{\sigma}{1 - \sigma^2}$, $\hat I_\rho$ and $\hat I_\tau$ are asymptotically normal estimates of $I$, with asymptotic variances proportional to $\sigma^2$ and hence vanishing as $\sigma \to 0$.

\section{Estimating Entropy}
\label{sec:entropy}
Thus far, we have discussed estimation of mutual information $I(X)$. Mutual information is convenient because it is invariant under marginal transformation, and hence $I(X) = I(f(X))$ depends only on $\Sigma$. While the entropy $H(X)$ does depend on the marginal transform $f$, fortunately, by Eq.~\eqref{eq:entropy_relation}, $H(X)$ differs from $I(X)$ only by a sum of univariate entropies. Univariate nonparametric estimation of entropy in has been studied extensively, and there exist several estimators (e.g., based on sample spacings~\citep{beirlant97entropyOverview}, kernel density estimates~\citep{moon16improving} or $k$-nearest neighbor methods~\citep{singh16kNNEntropy}) that can estimate $H(X_j)$ at the rate $\asymp n\inv$ in MSE under relatively mild conditions on the marginal density $p_j$. While the precise assumptions vary with the choice of estimator, they are mainly (a) that $p_j$ be lower bounded on its support or have particular (e.g., exponential) tail behavior, and (b) that $p_j$ be smooth, typically quantified by a H\"older or Sobolev condition. Details of these assumptions are in the Appendix. 

Under these conditions, since there exist estimators $\hat H_1,...,\hat H_D$ and a constant $C > 0$ such that
\begin{equation}
\E [(\hat H_j - H(X_j))^2] \leq C/n,
\quad \forall j \in [D].
\label{ineq:MSE_of_sum_of_marginal_entropies}
\end{equation}
Combining these estimators with an estimator, say $\hat I_{\rho,z}$, of mutual information gives an estimator of entropy:
\[\textstyle \hat H_{\rho,z} := \sum_{j = 1}^D \hat H_j - \hat I_{\rho,z}.\]
If we assume $z = \lambda_D\inv(\Sigma)$ is bounded below by a positive constant, combining inequality~\eqref{ineq:MSE_of_sum_of_marginal_entropies} with Corollary~\ref{corr:specific_I_rho_MSE_bound} gives
\[\E \left[ \left( \hat H_{\rho,z} - H(X) \right)^2 \right] \leq \frac{CD^2}{n},\]
where the constant $C$ may differ from in \eqref{ineq:MSE_of_sum_of_marginal_entropies} but is independent of $n$ and $D$.


\section{Conclusions and Future Work}
\label{sec:conc_and_future}

This paper we suggests nonparanormal information estimation as a practical compromise between the difficult nonparametric case and the restrictive Gaussian case. We proposed three estimators for this problem, and provided the first upper bounds for nonparanormal information estimation. We also provided lower bounds showing how dependence on $\Sigma$ differs from the Gaussian case, and we demonstrated empirically that nonparanormal estimators are more robust than Gaussian estimators, even when dimension is too high for fully nonparametric estimators.

Collectively, these results suggest that, by scaling to moderate or high dimensionality without relying on Gaussianity, nonparanormal information estimators may be effective tools with a number of machine learning applications. While the best choice of information estimator inevitably depends on context, as a crude off-the-shelf guide for practitioners, the estimators we might suggest, in order of preference, are:
\begin{itemize}[leftmargin=*,noitemsep,topsep=0pt]
\item
fully nonparametric if $D < 6, n > \max\{100,10^D\}$.
\item
$\hat I_\rho$ if $D^2/n$ is small and data may have outliers.
\item
$\hat I_\tau$ if $D^2/n$ is small and dependencies may be strong.
\item
$\hat I_G$ otherwise.
\item
$\hat I$ only given strong belief that data are nearly Gaussian.
\end{itemize}


There are many natural open questions in this line of work. First, in the nonparanormal model, we focused on estimating mutual information $I(X)$, which does not depend on marginal transforms $f$, and entropy, which decomposes into $I(X)$ and $1$-dimensional entropies. In both cases, additional structure imposed by the nonparanormal model allows estimation in higher dimensions than fully nonparametric models. Can nonparanormal assumptions lead to higher dimensional estimators for the many other useful nonlinear functionals of densities (e.g., $L_p$ norms/distances and more general (e.g., R\'enyi or Tsallis) entropies, mutual informations, and divergences) that do not decompose?

Second, there is a gap between our upper bound rate of $\|\Sigma\inv\|_2^2 D^2/n$ and the only known lower bound of $2D/n$ (from the Gaussian case), thought we also showed that bounds for rank-based estimators depend on $\Sigma$. Is quadratic dependence on $D$ optimal? How much do rates improve under structural assumptions on $\Sigma$? Upper bounds should be derived for other estimators, such as $\hat I_G$ and $\hat I_\tau$. The $2D/n$ lower bound proof of \citet{cai15logDetCov} for the Gaussian case, based on the Cramer-Rao inequality~\citep{van07parameter}, is unlikely to tighten in the nonparanormal case, since Fisher information is invariant to diffeomorphisms of the data. Hence, a new approach is needed if the lower bound in the nonparanormal case is to be raised.



Finally, our work also applies to estimating the log-determinant $\log|\Sigma|$ of the latent correlation matrix in a nonparanormal model. In addition to information estimation, the work of \citet{cai15logDetCov} on estimating $\log|\Sigma|$ in the Gaussian setting was motivated by the use of $\log|\Sigma|$ in several other multivariate statistical tools, including
quadratic discriminant analysis (QDA) and
MANOVA~\citep{anderson84multivariate}. Can our estimators lead to more robust nonparanormal versions of these procedures?

\bibliography{biblio}

\begin{thebibliography}{68}
\providecommand{\natexlab}[1]{#1}
\providecommand{\url}[1]{\texttt{#1}}
\expandafter\ifx\csname urlstyle\endcsname\relax
  \providecommand{\doi}[1]{doi: #1}\else
  \providecommand{\doi}{doi: \begingroup \urlstyle{rm}\Url}\fi

\bibitem[Adami(2004)]{adami04information}
Adami, Christoph.
\newblock Information theory in molecular biology.
\newblock \emph{Physics of Life Reviews}, 1\penalty0 (1):\penalty0 3--22, 2004.

\bibitem[Aghagolzadeh et~al.(2007)]{aghagolzadeh07hierarchical}
Aghagolzadeh, Mehdi et~al.
\newblock A hierarchical clustering based on mutual information maximization.
\newblock In \emph{International Conference on Image Processing}, volume~1,
  pp.\  I--277. IEEE, 2007.

\bibitem[Aghakouchak(2014)]{aghakouchak14entropy}
Aghakouchak, Amir.
\newblock Entropy--copula in hydrology and climatology.
\newblock \emph{J. Hydrometeorology}, 15\penalty0 (6):\penalty0 2176--2189,
  2014.

\bibitem[Ahmed \& Gokhale(1989)Ahmed and Gokhale]{ahmed89entropy}
Ahmed, Nabil~Ali and Gokhale, DV.
\newblock Entropy expressions and their estimators for multivariate
  distributions.
\newblock \emph{IEEE Trans. on Information Theory}, 35\penalty0 (3):\penalty0
  688--692, 1989.

\bibitem[Anderson(1984)]{anderson84multivariate}
Anderson, TW.
\newblock Multivariate statistical analysis.
\newblock \emph{Wi1ey and Sons, New York, NY}, 1984.

\bibitem[Beirlant et~al.(1997)]{beirlant97entropyOverview}
Beirlant, Jan et~al.
\newblock Nonparametric entropy estimation: An overview.
\newblock \emph{International J. of Mathematical and Statistical Sciences},
  6\penalty0 (1):\penalty0 17--39, 1997.

\bibitem[Bercher \& Vignat(2000)Bercher and Vignat]{bercher00estimating}
Bercher, J-F and Vignat, Christophe.
\newblock Estimating the entropy of a signal with applications.
\newblock \emph{IEEE Trans. on Signal Processing}, 48\penalty0 (6):\penalty0
  1687--1694, 2000.

\bibitem[Berezi{\'n}ski et~al.(2015)Berezi{\'n}ski, Jasiul, and
  Szpyrka]{berezinski15entropy}
Berezi{\'n}ski, Przemys{\l}aw, Jasiul, Bartosz, and Szpyrka, Marcin.
\newblock An entropy-based network anomaly detection method.
\newblock \emph{Entropy}, 17\penalty0 (4):\penalty0 2367--2408, 2015.

\bibitem[Berkes et~al.(2009)Berkes, Wood, and
  Pillow]{berkes09neuralDependencies}
Berkes, Pietro, Wood, Frank, and Pillow, Jonathan~W.
\newblock Characterizing neural dependencies with copula models.
\newblock In \emph{NIPS}, pp.\  129--136, 2009.

\bibitem[Bickel \& Levina(2008)Bickel and Levina]{bickel08regularized}
Bickel, Peter~J and Levina, Elizaveta.
\newblock Regularized estimation of large covariance matrices.
\newblock \emph{Annals of Stat.}, pp.\  199--227, 2008.

\bibitem[Birg{\'e} \& Massart(1995)Birg{\'e} and
  Massart]{birge95densityFunctionals}
Birg{\'e}, Lucien and Massart, Pascal.
\newblock Estimation of integral functionals of a density.
\newblock \emph{Annals of Stat.}, pp.\  11--29, 1995.

\bibitem[Cai et~al.(2012)Cai, Yuan, et~al.]{cai12adaptiveCovarianceMatrix}
Cai, T~Tony, Yuan, Ming, et~al.
\newblock Adaptive covariance matrix estimation through block thresholding.
\newblock \emph{Annals of Stat.}, 40\penalty0 (4):\penalty0 2014--2042, 2012.

\bibitem[Cai et~al.(2015)Cai, Liang, and Zhou]{cai15logDetCov}
Cai, T~Tony, Liang, Tengyuan, and Zhou, Harrison~H.
\newblock Law of log determinant of sample covariance matrix and optimal
  estimation of differential entropy for high-dimensional {G}aussian
  distributions.
\newblock \emph{J. of Multivariate Analysis}, 137:\penalty0 161--172, 2015.

\bibitem[Calsaverini \& Vicente(2009)Calsaverini and
  Vicente]{calsaverini09copulaInformation}
Calsaverini, Rafael~S and Vicente, Renato.
\newblock An information-theoretic approach to statistical dependence: Copula
  information.
\newblock \emph{EPL (Europhysics Letters)}, 88\penalty0 (6):\penalty0 68003,
  2009.

\bibitem[Chai et~al.(2009)]{chai09fMRI}
Chai, Barry et~al.
\newblock Exploring functional connectivities of the human brain using
  multivariate information analysis.
\newblock In \emph{NIPS}, pp.\  270--278, 2009.

\bibitem[Chow \& Liu(1968)Chow and Liu]{chow68chowLiuTree}
Chow, C and Liu, Cong.
\newblock Approximating discrete probability distributions with dependence
  trees.
\newblock \emph{IEEE transactions on Information Theory}, 14\penalty0
  (3):\penalty0 462--467, 1968.

\bibitem[Elidan(2013)]{elidan13copulas}
Elidan, Gal.
\newblock Copulas in machine learning.
\newblock In \emph{Copulae in mathematical and quantitative finance}, pp.\
  39--60. Springer, 2013.

\bibitem[Friedman \& Stuetzle(1981)Friedman and Stuetzle]{friedman81projection}
Friedman, Jerome~H and Stuetzle, Werner.
\newblock Projection pursuit regression.
\newblock \emph{JASA}, 76\penalty0 (376):\penalty0 817--823, 1981.

\bibitem[Gao et~al.(2015)Gao, Ver~Steeg, and Galstyan]{gao15efficient}
Gao, Shuyang, Ver~Steeg, Greg, and Galstyan, Aram.
\newblock Efficient estimation of mutual information for strongly dependent
  variables.
\newblock In \emph{AISTATS}, 2015.

\bibitem[Garner(1962)]{garner62multivariateConstraint}
Garner, Wendell~R.
\newblock \emph{Uncertainty and structure as psychological concepts.}
\newblock Wiley, 1962.

\bibitem[Gershgorin(1931)]{gershgorin31circleThm}
Gershgorin, Semyon~Aranovich.
\newblock Uber die abgrenzung der eigenwerte einer matrix.
\newblock pp.\  749--754, 1931.

\bibitem[Goria et~al.(2005)]{goria05new}
Goria, M.~N. et~al.
\newblock A new class of random vector entropy estimators and its applications
  in testing statistical hypotheses.
\newblock \emph{J. Nonparametric Stat.}, 17:\penalty0 277--297, 2005.

\bibitem[Han et~al.(2015)Han, Malioutov, and Shin]{han15largeLogDet}
Han, Insu, Malioutov, Dmitry, and Shin, Jinwoo.
\newblock Large-scale log-determinant computation through stochastic chebyshev
  expansions.
\newblock In \emph{ICML}, pp.\  908--917, 2015.

\bibitem[Henrion \& Malick(2012)Henrion and
  Malick]{henrion12semidefiniteProjection}
Henrion, Didier and Malick, J{\'e}r{\^o}me.
\newblock Projection methods in conic optimization.
\newblock In \emph{Handbook on Semidefinite, Conic and Polynomial
  Optimization}, pp.\  565--600. Springer, 2012.

\bibitem[Hern{\'a}ndez-Lobato et~al.(2013)]{hernandez13gaussianCopulaFinancial}
Hern{\'a}ndez-Lobato, Jos{\'e}~Miguel et~al.
\newblock Gaussian process conditional copulas with applications to financial
  time series.
\newblock In \emph{NIPS}, pp.\  1736--1744, 2013.

\bibitem[Hero et~al.(2001)Hero, Ma, Michel, and Gorman]{hero01alpha}
Hero, Alfred~O, Ma, Bing, Michel, Olivier, and Gorman, John.
\newblock Alpha-divergence for classification, indexing and retrieval
  (revised).
\newblock 2001.

\bibitem[Hero et~al.(2002)Hero, Ma, Michel, and Gorman]{hero02applications}
Hero, Alfred~O, Ma, Bing, Michel, Olivier~JJ, and Gorman, John.
\newblock Applications of entropic spanning graphs.
\newblock \emph{IEEE Signal Processing Magazine}, 19\penalty0 (5):\penalty0
  85--95, 2002.

\bibitem[Hoff(2007)]{hoff07extending}
Hoff, Peter~D.
\newblock Extending the rank likelihood for semiparametric copula estimation.
\newblock \emph{The Annals of Applied Statistics}, pp.\  265--283, 2007.

\bibitem[Hunter \& Hodas(2016)Hunter and
  Hodas]{hunter16fittingDeepNonlinearModels}
Hunter, Jacob~S and Hodas, Nathan~O.
\newblock Mutual information for fitting deep nonlinear models.
\newblock \emph{arXiv preprint arXiv:1612.05708}, 2016.

\bibitem[Ince et~al.(2016)]{ince16I_G}
Ince, Robin et~al.
\newblock A statistical framework for neuroimaging data analysis based on
  mutual information estimated via a {G}aussian copula.
\newblock \emph{Human Brain Mapping}, 2016.

\bibitem[Kandasamy et~al.(2015)]{kandasamy15vonMises}
Kandasamy, Kirthevasan et~al.
\newblock Nonparametric von mises estimators for entropies, divergences and
  mutual informations.
\newblock In \emph{NIPS}, pp.\  397--405, 2015.

\bibitem[Klaassen \& Wellner(1997)Klaassen and
  Wellner]{klaassen97bivariateNormalCopula}
Klaassen, Chris~AJ and Wellner, Jon~A.
\newblock Efficient estimation in the bivariate normal copula model: normal
  margins are least favourable.
\newblock \emph{Bernoulli}, 3\penalty0 (1):\penalty0 55--77, 1997.

\bibitem[Krishnamurthy et~al.(2014)]{krishnamurthy14Renyi}
Krishnamurthy, Akshay et~al.
\newblock Nonparametric estimation of {R}{\'e}nyi divergence and friends.
\newblock In \emph{ICML}, pp.\  919--927, 2014.

\bibitem[Kruskal(1958)]{kruskal58ordinal}
Kruskal, William~H.
\newblock Ordinal measures of association.
\newblock \emph{JASA}, 53\penalty0 (284):\penalty0 814--861, 1958.

\bibitem[Learned-Miller \& Fisher(2003)Learned-Miller and Fisher]{radical03}
Learned-Miller, E.~G. and Fisher, J.~W.
\newblock {ICA} using spacings estimates of entropy.
\newblock \emph{JMLR}, 4:\penalty0 1271--1295, 2003.

\bibitem[Liu et~al.(2012)]{liu12SKEPTIC}
Liu, Han et~al.
\newblock High-dimensional semiparametric {G}aussian copula graphical models.
\newblock \emph{Annals of Stat.}, 40\penalty0 (4):\penalty0 2293--2326, 2012.

\bibitem[Ma \& Sun(2011)Ma and Sun]{ma11copulaEntropy}
Ma, Jian and Sun, Zengqi.
\newblock Mutual information is copula entropy.
\newblock \emph{Tsinghua Science \& Tech.}, 16\penalty0 (1):\penalty0 51--54,
  2011.

\bibitem[Malevergne et~al.(2003)]{malevergne03testing}
Malevergne, Yannick et~al.
\newblock Testing the {G}aussian copula hypothesis for financial assets
  dependences.
\newblock \emph{Quantitative Finance}, 3\penalty0 (4):\penalty0 231--250, 2003.

\bibitem[Misra et~al.(2005)Misra, Singh, and Demchuk]{misra05estimation}
Misra, Neeraj, Singh, Harshinder, and Demchuk, Eugene.
\newblock Estimation of the entropy of a multivariate normal distribution.
\newblock \emph{J. Multivariate Analysis}, 92\penalty0 (2):\penalty0 324--342,
  2005.

\bibitem[Mitra \& Zhang(2014)Mitra and Zhang]{mitra14nonparametricCovariance}
Mitra, Ritwik and Zhang, Cun-Hui.
\newblock Multivariate analysis of nonparametric estimates of large correlation
  matrices.
\newblock \emph{arXiv preprint arXiv:1403.6195}, 2014.

\bibitem[Moon \& Hero(2014{\natexlab{a}})Moon and
  Hero]{moon14fDivergenceConfidence}
Moon, Kevin and Hero, Alfred.
\newblock Multivariate f-divergence estimation with confidence.
\newblock In \emph{NIPS}, pp.\  2420--2428, 2014{\natexlab{a}}.

\bibitem[Moon \& Hero(2014{\natexlab{b}})Moon and Hero]{moon14fDivergence}
Moon, Kevin~R and Hero, Alfred~O.
\newblock Ensemble estimation of multivariate f-divergence.
\newblock In \emph{ISIT}, pp.\  356--360. IEEE, 2014{\natexlab{b}}.

\bibitem[Moon et~al.(2017)Moon, Sricharan, and Hero~III]{moon17ensembleMI}
Moon, Kevin~R, Sricharan, Kumar, and Hero~III, Alfred~O.
\newblock Ensemble estimation of mutual information.
\newblock \emph{arXiv preprint arXiv:1701.08083}, 2017.

\bibitem[Moon et~al.(2016)]{moon16improving}
Moon, Kevin~R et~al.
\newblock Improving convergence of divergence functional ensemble estimators.
\newblock In \emph{ISIT}, pp.\  1133--1137. IEEE, 2016.

\bibitem[Noble \& Cook(2003)Noble and Cook]{noble03graphAnomaly}
Noble, Caleb~C and Cook, Diane~J.
\newblock Graph-based anomaly detection.
\newblock In \emph{KDD}, pp.\  631--636. ACM, 2003.

\bibitem[Nychis et~al.(2008)]{nychis08empirical}
Nychis, George et~al.
\newblock An empirical evaluation of entropy-based traffic anomaly detection.
\newblock In \emph{SIGCOMM Conf. on Internet Measurement}, pp.\  151--156. ACM,
  2008.

\bibitem[P{\'a}l et~al.(2010)P{\'a}l, P{\'o}czos, and
  Szepesv{\'a}ri]{pal10RenyiEntropyEstimation}
P{\'a}l, D{\'a}vid, P{\'o}czos, Barnab{\'a}s, and Szepesv{\'a}ri, Csaba.
\newblock Estimation of {R}{\'e}nyi entropy and mutual information based on
  generalized nearest-neighbor graphs.
\newblock In \emph{NIPS}, pp.\  1849--1857, 2010.

\bibitem[Peng et~al.(2005)Peng, Long, and Ding]{peng05feature}
Peng, Hanchuan, Long, Fuhui, and Ding, Chris.
\newblock Feature selection based on mutual information criteria of
  max-dependency, max-relevance, and min-redundancy.
\newblock \emph{IEEE Trans. on Pattern Analysis and Machine Intelligence},
  27\penalty0 (8):\penalty0 1226--1238, 2005.

\bibitem[Renard \& Lang(2007)Renard and Lang]{renard07hydrology}
Renard, Benjamin and Lang, Michel.
\newblock Use of a {G}aussian copula for multivariate extreme value analysis:
  some case studies in hydrology.
\newblock \emph{Advances in Water Resources}, 30\penalty0 (4):\penalty0
  897--912, 2007.

\bibitem[Rey \& Roth(2012)Rey and Roth]{rey12metaGaussianIB}
Rey, M{\'e}lanie and Roth, Volker.
\newblock Meta-{G}aussian information bottleneck.
\newblock In \emph{NIPS}, pp.\  1916--1924, 2012.

\bibitem[Shan et~al.(2005)Shan, Gong, and McOwan]{shan05conditional}
Shan, Caifeng, Gong, Shaogang, and McOwan, Peter~W.
\newblock Conditional mutual infomation based boosting for facial expression
  recognition.
\newblock In \emph{BMVC}, 2005.

\bibitem[Shishkin et~al.(2016)]{shishkin16efficient}
Shishkin, Alexander et~al.
\newblock Efficient high-order interaction-aware feature selection based on
  conditional mutual information.
\newblock In \emph{NIPS}, pp.\  4637--4645, 2016.

\bibitem[Singh \& P{\'o}czos(2014{\natexlab{a}})Singh and
  P{\'o}czos]{singh2014exponential}
Singh, Shashank and P{\'o}czos, Barnab{\'a}s.
\newblock Exponential concentration of a density functional estimator.
\newblock In \emph{NIPS}, pp.\  3032--3040, 2014{\natexlab{a}}.

\bibitem[Singh \& P{\'o}czos(2014{\natexlab{b}})Singh and
  P{\'o}czos]{singh2014generalized}
Singh, Shashank and P{\'o}czos, Barnab{\'a}s.
\newblock Generalized exponential concentration inequality for {R}{\'e}nyi
  divergence estimation.
\newblock In \emph{ICML}, pp.\  333--341, 2014{\natexlab{b}}.

\bibitem[Singh \& P{\'o}czos(2016{\natexlab{a}})Singh and
  P{\'o}czos]{singh16KNNFunctionals}
Singh, Shashank and P{\'o}czos, Barnab{\'a}s.
\newblock Finite-sample analysis of fixed-k nearest neighbor density functional
  estimators.
\newblock In \emph{NIPS}, pp.\  1217--1225, 2016{\natexlab{a}}.

\bibitem[Singh \& P{\'o}czos(2016{\natexlab{b}})Singh and
  P{\'o}czos]{singh16kNNEntropy}
Singh, Shashank and P{\'o}czos, Barnab{\'a}s.
\newblock Analysis of k-nearest neighbor distances with application to entropy
  estimation.
\newblock \emph{arXiv preprint arXiv:1603.08578}, 2016{\natexlab{b}}.

\bibitem[Sricharan et~al.(2010)Sricharan, Raich, and
  Hero~III]{sricharan10entropyConfidence}
Sricharan, Kumar, Raich, Raviv, and Hero~III, Alfred~O.
\newblock Empirical estimation of entropy functionals with confidence.
\newblock \emph{arXiv preprint arXiv:1012.4188}, 2010.

\bibitem[Sricharan et~al.(2013)Sricharan, Wei, and Hero]{sricharan2013ensemble}
Sricharan, Kumar, Wei, Dennis, and Hero, Alfred~O.
\newblock Ensemble estimators for multivariate entropy estimation.
\newblock \emph{Trans. on Information Theory}, 59\penalty0 (7):\penalty0
  4374--4388, 2013.

\bibitem[Srivastava \& Gupta(2008)Srivastava and Gupta]{srivastava08bayesian}
Srivastava, Santosh and Gupta, Maya~R.
\newblock Bayesian estimation of the entropy of the multivariate {G}aussian.
\newblock In \emph{ISIT}, pp.\  1103--1107. IEEE, 2008.

\bibitem[Studen{\`y} \& Vejnarov{\'a}(1998)Studen{\`y} and
  Vejnarov{\'a}]{studeny98multiinformation}
Studen{\`y}, Milan and Vejnarov{\'a}, Jirina.
\newblock The multiinformation function as a tool for measuring stochastic
  dependence.
\newblock In \emph{Learning in graphical models}, pp.\  261--297. Springer,
  1998.

\bibitem[Szab{\'o} et~al.(2007{\natexlab{a}})Szab{\'o}, P{\'o}czos, and
  L{\H{o}}rincz]{szabo07undercomplete_TCC}
Szab{\'o}, Z., P{\'o}czos, B., and L{\H{o}}rincz, A.
\newblock Undercomplete blind subspace deconvolution.
\newblock \emph{JMLR}, 8:\penalty0 1063--1095, 2007{\natexlab{a}}.

\bibitem[Szab{\'o} et~al.(2007{\natexlab{b}})Szab{\'o}, P{\'o}czos, Szirtes,
  and L{\H{o}}rincz]{szabo07post}
Szab{\'o}, Zolt{\'a}n, P{\'o}czos, Barnab{\'a}s, Szirtes, G{\'a}bor, and
  L{\H{o}}rincz, Andr{\'a}s.
\newblock Post nonlinear independent subspace analysis.
\newblock In \emph{International Conference on Artificial Neural Networks},
  pp.\  677--686. Springer, 2007{\natexlab{b}}.

\bibitem[Tsybakov(2008)]{Tsybakov:2008:INE:1522486}
Tsybakov, A.B.
\newblock \emph{Introduction to Nonparametric Estimation}.
\newblock Springer Publishing Company, 1st edition, 2008.

\bibitem[Van~den Bos(2007)]{van07parameter}
Van~den Bos, Adriaan.
\newblock \emph{Parameter estimation for scientists and engineers}.
\newblock John Wiley \& Sons, 2007.

\bibitem[Varga(2009)]{varga09matrixIterativeAnalysis}
Varga, Richard~S.
\newblock \emph{Matrix Iterative Analysis}, volume~27.
\newblock Springer Science \& Business Media, 2009.

\bibitem[Watanabe(1960)]{watanabe60totalInformation}
Watanabe, Satosi.
\newblock Information theoretical analysis of multivariate correlation.
\newblock \emph{IBM J. of research and development}, 4\penalty0 (1):\penalty0
  66--82, 1960.

\bibitem[Wilson \& Ghahramani(2010)Wilson and Ghahramani]{wilson10copula}
Wilson, Andrew and Ghahramani, Zoubin.
\newblock Copula processes.
\newblock In \emph{NIPS}, pp.\  2460--2468, 2010.

\bibitem[Wolsztynski et~al.(2005)Wolsztynski, Thierry, and
  Pronzato]{Wolsztynski85minimum}
Wolsztynski, E., Thierry, E., and Pronzato, L.
\newblock Minimum-entropy estimation in semi-parametric models.
\newblock \emph{Signal Process.}, 85\penalty0 (5):\penalty0 937--949, 2005.
\newblock ISSN 0165-1684.

\end{thebibliography}
\bibliographystyle{icml2017}

\appendix
\label{appendix}

\section{Lemmas}
Our proofs rely on the following lemmas.
\begin{lemma}
{\bf (Convexity of the inverse operator norm):}
The function $A \mapsto \|A\inv\|_2$ is convex over $A \succ 0$.
\label{lemma:inv_op_norm_cvx}
\end{lemma}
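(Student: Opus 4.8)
The plan is to reduce this matrix statement to a one-variable composition of a concave function with a convex, decreasing function. Since the domain is the cone of symmetric positive definite matrices, for any such $A$ the eigenvalues of $A\inv$ are the reciprocals of those of $A$, so the spectral norm satisfies
\[
\|A\inv\|_2 = \lambda_1(A\inv) = \frac{1}{\lambda_D(A)},
\]
which reduces the claim to convexity of the map $A \mapsto 1/\lambda_D(A)$ over $A \succ 0$.

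Next I would establish that the smallest-eigenvalue map $A \mapsto \lambda_D(A)$ is concave on the symmetric matrices. This follows from the Rayleigh--Ritz variational characterization
\[
\lambda_D(A) = \min_{\|x\|_2 = 1} x^T A x,
\]
which exhibits $\lambda_D$ as a pointwise minimum, over the unit sphere, of the maps $A \mapsto x^T A x$. Each such map is linear, hence concave, in $A$, and an infimum of a family of concave functions is concave; therefore $\lambda_D$ is concave.

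Finally, I would invoke the standard composition rule: if $g$ is concave and $\phi$ is convex and nonincreasing on the range of $g$, then $\phi \circ g$ is convex. Taking $g(A) = \lambda_D(A)$, which is concave and strictly positive on $A \succ 0$, and $\phi(t) = 1/t$, which is convex and decreasing on $(0,\infty)$, gives that $\|A\inv\|_2 = \phi(g(A))$ is convex. The one-line verification is that, for $\theta \in [0,1]$, concavity of $g$ yields $g(\theta A + (1 - \theta)B) \geq \theta g(A) + (1 - \theta) g(B)$; applying the decreasing map $\phi$ reverses this inequality, and convexity of $\phi$ then bounds the right-hand side above by $\theta \phi(g(A)) + (1 - \theta)\phi(g(B))$, which is exactly the convexity inequality for $\phi \circ g$.

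I expect no serious obstacle here; the only points needing care are (i) confirming that $\succ 0$ refers to \emph{symmetric} positive definite matrices, so that $\|A\inv\|_2$ genuinely equals $1/\lambda_D(A)$ (for nonsymmetric $A$ one would instead obtain the reciprocal of the smallest singular value, requiring a different argument), and (ii) ensuring the outer function $\phi$ is both convex \emph{and} nonincreasing, since convexity alone does not suffice: the monotonicity is precisely what allows the inequality to survive the reversal when passing through the concave inner function $\lambda_D$.
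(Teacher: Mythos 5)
Your proof is correct and follows essentially the same route as the paper's: both identify $\|A\inv\|_2 = 1/\lambda_D(A)$ via the Rayleigh quotient, use that $\lambda_D$ is an infimum of linear maps (hence concave), and compose with the convex, decreasing function $t \mapsto 1/t$. The paper merely carries out this composition inline as a chain of inequalities rather than invoking the named composition rule, so there is no substantive difference.
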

\begin{proof}
For $A, B \succ 0$, let $C := \tau A + (1 - \tau) B$. Then,
\begin{align*}
\| \hat C\inv \|_2
& = \frac{1}{\inf_{x \in \R^D} x^T C x} \\
& = \frac{1}{\inf_{x \in \R^D} \tau x^T A x + (1 - \tau) x^T B x} \\
& \leq \frac{1}{\tau \inf_{x \in \R^D} x^T A x + (1 - \tau) \inf_{x \in \R^D} x^T B x} \\
& \leq \tau \frac{1}{\inf_{x \in \R^D} x^T A x}
  + (1 - \tau) \frac{1}{\inf_{x \in \R^D} x^T B x} \\
& = \tau \left\| A\inv \right\|_2 + (1 - \tau) \left\| B\inv \right\|_2
\end{align*}
via convexity of the function $x \mapsto 1/x$ on $(0, \infty)$.
\end{proof}

\begin{lemma}
{\bf (Mean-Value Bound on the Log-Determinant):}
Matrix derivative of log-determinant. Suppose $A, B \succ 0$. Then, for $\lambda := \min\{\lambda_D(A), \lambda_D(B)\}$,
\[\left| \log |A| - \log |B| \right| \leq \frac{1}{\lambda} \|A - B\|_F.\]
\label{lemma:log_det_mean_value}
\end{lemma}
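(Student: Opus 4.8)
The plan is to connect $A$ and $B$ by a line segment and integrate the derivative of the log-determinant along it. Define $C_t := (1-t) B + t A$ for $t \in [0,1]$; since $A, B \succ 0$ and the positive-definite cone is convex, $C_t \succ 0$ throughout, so $t \mapsto \log|C_t|$ is smooth on $[0,1]$. Using the standard matrix identity $\frac{d}{dt}\log|C_t| = \tr\!\left(C_t\inv \tfrac{d}{dt}C_t\right) = \tr\!\left(C_t\inv (A - B)\right)$ together with the fundamental theorem of calculus, I would write
\[
\log|A| - \log|B| = \int_0^1 \tr\!\left(C_t\inv (A - B)\right)\,dt,
\]
reducing the problem to a bound on the integrand that is uniform in $t$.

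Next I would control $\|C_t\inv\|_2$. By Lemma~\ref{lemma:inv_op_norm_cvx} (convexity of the inverse operator norm), $\|C_t\inv\|_2 \leq (1-t)\|B\inv\|_2 + t\|A\inv\|_2$; since $\|M\inv\|_2 = 1/\lambda_D(M)$ for $M \succ 0$ and both $\lambda_D(A), \lambda_D(B) \geq \lambda$, this gives $\|C_t\inv\|_2 \leq 1/\lambda$ for every $t \in [0,1]$. (Equivalently, concavity of $\lambda_D$ gives $\lambda_D(C_t) \geq \lambda$ directly.) It then remains to pass from this spectral-norm control of $C_t\inv$ to a bound on the trace functional $\tr\!\left(C_t\inv (A-B)\right)$, and to integrate.

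The hard part will be matching the norm on the right-hand side. The natural estimate is von Neumann's trace inequality (Hölder for singular values), $|\tr(SM)| \leq \|S\|_2 \sum_i \sigma_i(M) = \|S\|_2\,\|M\|_*$, where $\|M\|_*$ denotes the nuclear norm; with $S = C_t\inv$ and $M = A - B$ this yields $|\tr(C_t\inv(A-B))| \leq \tfrac1\lambda\|A-B\|_*$, and integrating over $t$ gives $|\log|A|-\log|B|| \leq \tfrac1\lambda\|A-B\|_*$. Converting this to the \emph{Frobenius} norm is the delicate step: only the lossy comparison $\|A-B\|_* \leq \sqrt{D}\,\|A-B\|_F$ is available in general, so the clean argument actually delivers $\tfrac{\sqrt D}{\lambda}\|A-B\|_F$ (the Cauchy--Schwarz route $|\tr(SM)| \leq \|S\|_F\|M\|_F \leq \tfrac{\sqrt D}{\lambda}\|M\|_F$ reaches the same conclusion more directly). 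A strictly dimension-free Frobenius bound as written cannot hold verbatim -- for instance $A = 2\epsilon I_D$, $B = \epsilon I_D$ give $\log|A|-\log|B| = D\log 2$ while $\tfrac1\lambda\|A-B\|_F = \sqrt D$ -- so I would read the intended statement as carrying the nuclear norm (the form the proof genuinely produces) or an extra $\sqrt D$ factor, and would flag this norm-matching as the one substantive obstacle; the integral representation and the eigenvalue bound on $\|C_t\inv\|_2$ are routine.
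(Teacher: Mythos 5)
Your route is essentially the paper's own: the paper applies a matrix mean value theorem to write $\log|A| - \log|B| = \tr\left(C\inv(A-B)\right)$ with $C = \tau A + (1-\tau)B$ for some $\tau \in (0,1)$ (your integral representation is the same identity in a different dress), and then controls $\|C\inv\|_2 \leq 1/\lambda$ via Lemma~\ref{lemma:inv_op_norm_cvx}, exactly as you do. The step where you and the paper part ways is precisely the one you flagged as the ``one substantive obstacle,'' and you are right to flag it: the paper asserts that the trace inner product ``can be bounded by the product of the operator and Frobenius norms,'' i.e., $\left|\tr\left(C\inv(A-B)\right)\right| \leq \|C\inv\|_2\,\|A-B\|_F$, and this inequality is false in general (take both matrices equal to $I_D$: the trace is $D$ while the claimed bound is $\sqrt{D}$). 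The correct H\"older pairing for the operator norm is the nuclear norm, not the Frobenius norm --- which is exactly what your von Neumann step produces. Moreover, your counterexample $A = 2\e I_D$, $B = \e I_D$ shows that no alternative proof can rescue the statement as written, since then $\log|A| - \log|B| = D\log 2$ exceeds $\frac{1}{\lambda}\|A-B\|_F = \sqrt{D}$ once $D \geq 3$. So your proposal is not missing anything; it establishes the two correct variants (the nuclear-norm bound $\frac{1}{\lambda}\|A-B\|_*$, or the Frobenius bound with an extra $\sqrt{D}$ via Cauchy--Schwarz), and in doing so it exposes a genuine error in both the lemma and the paper's proof of it.

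It is worth recording how the error propagates, since the lemma is invoked twice. In Proposition~\ref{prop:I_rho_bias_bound_appendix}, the corrected lemma turns the chain $\frac{1}{z}\E\left\|\hat\Sigma_z - \Sigma_z\right\|_F \leq \frac{\sqrt{D}}{z}\E\left\|\hat\Sigma_z - \Sigma_z\right\|_2$ into $\frac{D}{z}\E\left\|\hat\Sigma_z - \Sigma_z\right\|_2$, inflating the bias bound from order $D/(z\sqrt{n})$ to $D^{3/2}/(z\sqrt{n})$. In Proposition~\ref{prop:I_rho_variance_bound_appendix}, the bounded-difference constant $6\pi D/(zn)$ becomes $6\pi D^{3/2}/(zn)$, so the variance bound becomes of order $D^3/(z^2 n)$, and consequently the MSE rate in Theorem~\ref{thm:general_I_rho_MSE_bound} degrades from $D^2/(z^2 n)$ to $D^3/(z^2 n)$.
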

\begin{proof}
\emph{Proof:} First recall that the log-determinant is continuously differentiable over the strict positive definite cone, with $\nabla_X \log |X| = X\inv$ for any $X \succ 0$. Hence, by the matrix-valued version of the mean value theorem,
\[\log |A| - \log |B| = tr(C\inv(A - B)),\]
where $C = \tau A + (1 - \tau) B$ for some $\tau \in (0, 1)$. Since for positive definite matrices, the inner product can be bounded by the product of the operator and Frobenius norms, and clearly $C \succ 0$, we have
\[\left| \log |A| - \log |B| \right| = \|C\inv\|_2\|A - B\|_F.\]
Finally, it follows by Lemma \ref{lemma:inv_op_norm_cvx} that
\[\left| \log |A| - \log |B| \right| \leq \frac{1}{\lambda} \|A - B\|_F.\]
\end{proof}

\section{Proofs of Main Results}

Here, we give proofs of our main theoretical results, beginning with upper bounds on the MSE of $\hat I_\rho$ and proceeding to minimax lower bounds in terms of $\Sigma$.

\section{Upper bounds on the MSE of $\hat I_\rho$}

\begin{proposition}
\begin{align*}
& \left| \E \left[ \log |\hat \Sigma_z| \right] - \log \left|\Sigma\right| \right| \\
& \leq C \left( \|\Sigma\|_2^2\frac{D}{z^2n} + \left( \sum_{\lambda_j(\Sigma) < z} \log \left( \frac{z}{\lambda_j(\Sigma)} \right) \right)^2 \right).
\end{align*}
\label{prop:I_rho_bias_bound_appendix}
\end{proposition}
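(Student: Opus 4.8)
The plan is to separate the bias of $\log|\hat\Sigma_z|$ into a deterministic regularization piece and a stochastic piece, and to handle the stochastic piece by a second-order Taylor expansion of the log-determinant. First I would write
\[
\E\bigl[\log|\hat\Sigma_z|\bigr] - \log|\Sigma|
= \underbrace{\bigl(\E\bigl[\log|\hat\Sigma_z|\bigr] - \log|\Sigma_z|\bigr)}_{=:B_1}
+ \underbrace{\bigl(\log|\Sigma_z| - \log|\Sigma|\bigr)}_{=:B_2}.
\]
The term $B_2$ equals the deterministic regularization bias $\sum_{\lambda_j(\Sigma) < z}\log(z/\lambda_j(\Sigma))$ already identified in the main text, and is the source of the regularization-dependent summand in the claimed bound; it requires no probabilistic argument. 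Since $\hat\Sigma_\rho$ (hence $\hat\Sigma_z$ and $\log|\hat\Sigma_z|$) is a function of the empirical rank matrix $R$, and the law of $R$ under $\mathcal{NPN}(\Sigma;f)$ coincides with its law under $\mathcal{N}(0,\Sigma)$, I may assume the data are Gaussian throughout, which is what makes the moment structure of $\hat\rho$ tractable.

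The work is in $B_1$, where both $\hat\Sigma_z$ and $\Sigma_z$ lie in $\mathcal{S}(z)$ and are therefore $\succeq z I_D$, so $\log|\cdot|$ is smooth at both endpoints even though the projection onto $\mathcal{S}(z)$ is not. Applying the scalar Taylor theorem to $\phi(t) := \log|\Sigma_z + tH|$, with $H := \hat\Sigma_z - \Sigma_z$, and using $\phi'(t) = \tr((\Sigma_z+tH)\inv H)$ and $\phi''(t) = -\tr(C\inv H C\inv H)$ with $C = \Sigma_z + tH$, gives, for some $\xi\in(0,1)$ and $C = (1-\xi)\Sigma_z + \xi\hat\Sigma_z$,
\[
\log|\hat\Sigma_z| = \log|\Sigma_z| + \tr\bigl(\Sigma_z\inv H\bigr) - \tfrac12\,\tr\bigl(C\inv H C\inv H\bigr).
\]
As a convex combination of matrices in $\mathcal{S}(z)$, $C\succeq z I_D$, so $\|C\inv\|_2 \le 1/z$ (by Lemma~\ref{lemma:inv_op_norm_cvx}). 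Taking expectations, $B_1$ splits into a first-order term $\tr(\Sigma_z\inv\,\E[H])$ and a second-order remainder $-\tfrac12\,\E\,\tr(C\inv H C\inv H)$.

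For the second-order remainder, which I expect to dominate, the crude route $|\tr(C\inv H C\inv H)| \le z^{-2}\|H\|_F^2$ combined with $\E\|H\|_F^2 \le \E\|\hat\Sigma_\rho - \Sigma\|_F^2 = O(D^2/n)$ (using non-expansiveness of the projection, the $\tfrac{\pi}{3}$-Lipschitz map $2\sin(\tfrac{\pi}{6}\cdot)$, and the $O(1/n)$ per-entry second moment of Spearman's $\hat\rho$) only yields $O(D^2/(z^2n))$. To obtain the sharp \emph{linear} dependence on $D$ I would instead replace $C\inv$ by $\Sigma_z\inv$ up to lower-order error and evaluate $\E\,\tr(\Sigma_z\inv H\Sigma_z\inv H)$ using the covariance structure of the rank-correlation matrix: the quadratic form contracts the entrywise fluctuations against $\Sigma_z\inv$ and produces a trace of the form $\tfrac1n\tr\bigl((\Sigma\Sigma_z\inv)^2\bigr)$, which is at most $D\,\|\Sigma\|_2^2/z^2$ because $\Sigma\Sigma_z\inv$ has operator norm at most $\|\Sigma\|_2/z$ and $D$ eigenvalues. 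This is exactly the first summand of the claim. The first-order term $\tr(\Sigma_z\inv\,\E[H])$ must then be shown to be of the same or smaller order, transporting the $O(1/n)$ per-entry bias of $\hat\Sigma_\rho$ to $\hat\Sigma_z$ through the $1$-Lipschitz projection.

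The hard part will be the sharp evaluation of $\E\,\tr(\Sigma_z\inv H\Sigma_z\inv H)$ with linear-in-$D$ dependence. Unlike the Gaussian sample covariance, whose entries form a Wishart matrix and whose analogous quadratic form carries a $\tr(\Sigma_z\inv\Sigma)^2$-type contribution that is quadratic in $D$, the entries of $\hat\Sigma_\rho$ have a different fourth-moment structure; I must verify that the cross-entry correlations of $\hat\rho$ contribute only the trace-of-square term $\tr\bigl((\Sigma\Sigma_z\inv)^2\bigr)$, with any residual quadratic-in-$D$ piece cancelling against the first-order bias term. Establishing this covariance structure for Spearman's $\rho$ in the Gaussian case, and cleanly combining it with the non-smoothness of the projection and the regularization term $B_2$, is where the real effort lies; the remaining steps reduce to the Lipschitz bound, Lemma~\ref{lemma:inv_op_norm_cvx}, and bias–variance bookkeeping of the kind already used for Corollary~\ref{prop:I_rho_variance_bound}.
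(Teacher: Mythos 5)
Your opening decomposition into $B_1$ and $B_2$ is exactly the paper's first step (a triangle inequality about $\log|\Sigma_z|$), and your reduction to Gaussian data via the rank matrix is sound, but from there you and the paper part ways completely. The paper never performs a second-order expansion and never separates the bias of $\hat\Sigma_z$ from its fluctuations: it bounds $B_1$ in one line by the mean-value inequality of Lemma~\ref{lemma:log_det_mean_value}, namely $\E\left[\left|\log|\hat\Sigma_z| - \log|\Sigma_z|\right|\right] \leq \frac{1}{z}\E\left[\|\hat\Sigma_z - \Sigma_z\|_F\right] \leq \frac{\sqrt{D}}{z}\E\left[\|\hat\Sigma_z - \Sigma_z\|_2\right]$, and then closes with an external spectral-norm concentration result for rank-based covariance estimators (Theorem 1 of \citet{mitra14nonparametricCovariance}), which gives $\E\left[\|\hat\Sigma_z - \Sigma_z\|_2\right] \leq C_{MZ}\|\Sigma\|_2\sqrt{D/n}$. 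That delivers a bound of order $\|\Sigma\|_2 D/(z\sqrt{n})$ --- the $1/\sqrt{n}$ rate claimed in Proposition~\ref{prop:I_rho_bias_bound} of the main text --- and that is all the paper actually establishes; the display in the appendix statement ($\|\Sigma\|_2^2 D/(z^2 n)$ plus a squared log term) is not what the paper's own proof yields. So you are chasing a strictly sharper bound than the one the paper proves, by a far more delicate route.

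That sharper route has a genuine hole at its center, which you flag yourself but do not close: the claim that $\E\left[\tr(\Sigma_z\inv H\Sigma_z\inv H)\right]$ reduces to $\frac{1}{n}\tr\left((\Sigma\Sigma_z\inv)^2\right)$, with any residual quadratic-in-$D$ piece cancelling against the first-order bias term. There is no reason to expect this cancellation. Even in the pure Gaussian sample-covariance benchmark, writing $\hat E = \hat\Sigma - \Sigma$, one computes $\E\left[\tr(A\hat E A\hat E)\right] = \frac{1}{n}\left[\tr\left((A\Sigma)^2\right) + \left(\tr(A\Sigma)\right)^2\right]$; with $A \approx \Sigma\inv$ the second summand is of order $D^2/n$ and does not cancel --- it is precisely the bias that forces the explicit correction term in the estimator of \citet{cai15logDetCov}. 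Spearman's $\hat\rho$ is an asymptotically linear (U-statistic) estimator whose entries have nonzero $O(1/n)$ cross-covariances for all index pairs, so the analogous squared-trace contribution is present there as well, and you exhibit no mechanism by which it disappears. Two subsidiary steps also fail as stated: replacing the random $C\inv$ in the Lagrange remainder by $\Sigma_z\inv$ ``up to lower-order error'' is as hard as the bound you are trying to prove (the replacement error is of the same order unless you already control higher moments of $\|H\|$); and the first-order term $\tr(\Sigma_z\inv\E[H])$ cannot be handled by ``transporting the $O(1/n)$ per-entry bias of $\hat\rho$ through the $1$-Lipschitz projection,'' because the projection onto $\mathcal{S}(z)$ is a global spectral operation: $\E[\hat\Sigma_z - \hat\Sigma_\rho]$ is governed by the event $\lambda_D(\hat\Sigma_\rho) < z$, which occurs with constant probability when $\lambda_D(\Sigma)$ is near $z$ and then shifts the spectrum by $\Theta(\sqrt{D/n})$, not $O(1/n)$. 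If you retreat to the crude bounds you dismiss ($z^{-2}\E[\|H\|_F^2] = O(D^2/(z^2 n))$ for the remainder and $\frac{\sqrt{D}}{z}\E[\|H\|_F] = O(D^{3/2}/(z\sqrt{n}))$ for the first-order term), you get something provable but weaker in $D$ than the paper's one-line argument; the ingredient you are missing, and on which the paper leans, is the Mitra--Zhang spectral-norm bound combined with the simple mean-value inequality.
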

\begin{proof}
By the triangle inequality,
\begin{align*}
\left| \E \left[ \log |\hat \Sigma_z| \right] - \log \left|\Sigma\right| \right|
& \leq \left| \E \left[ \log |\hat \Sigma_z| \right] - \log \left|\Sigma_z\right| \right| \\
&   + \left| \log |\Sigma_z| - \log |\Sigma| \right|
\end{align*}
For the first term, applying the matrix mean value theorem (Lemma~\ref{lemma:log_det_mean_value}) and the inequality
$\|A\|_F \leq \sqrt{D} \|A\|_2$
\begin{align*}
\left| \E \left[ \log \left| \hat \Sigma_z \right| \right] - \log |\Sigma_z| \right|
& \leq \E \left[ \left| \log \left| \hat \Sigma_z \right| - \log |\Sigma_z| \right| \right] \\
& \leq \frac{1}{z} \E \left[ \left\| \hat \Sigma_z - \Sigma_z \right\|_F \right] \\
& \leq \frac{\sqrt{D}}{z} \E \left[ \left\| \hat \Sigma_z - \Sigma_z \right\|_2 \right] \\
& \leq \frac{C_{MZ} \|\Sigma\|_2 D}{z\sqrt{n}},
\end{align*}
where we used Theorem 1 of \citet{mitra14nonparametricCovariance}, which gives a
constant $C_{MZ}$ such that
\[\E \left[ \left\| \hat \Sigma_z - \Sigma_z \right\|_2 \right] \leq C_{MZ} \|\Sigma\|_2 \sqrt{\frac{D}{n}}.\]
Via the bound $\|\Sigma\|_2 \leq \sqrt{D} \|\Sigma\|_\infty$, this reduces to
\[\E \left[ \left\| \hat \Sigma_z - \Sigma_z \right\|_2 \right]
  \leq C_{MZ} \frac{D}{\sqrt{n}}.\]

\end{proof}

\begin{proposition}
\[\Var \left[ \hat I \right]
  \leq \frac{36 \pi^2 D^2}{z^2 n}.\]
\label{prop:I_rho_variance_bound_appendix}
\end{proposition}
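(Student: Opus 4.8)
The plan is to derive this variance bound directly from the exponential concentration inequality already established in Proposition~\ref{prop:I_rho_exponential_concentration}, by integrating its tail over all thresholds via the standard layer-cake identity. Since the substantive work — showing that $\hat I_{\rho,z}$ concentrates sub-Gaussianly around its mean, via a bounded-differences (McDiarmid) argument — has been carried out in that proposition, what remains is an elementary moment computation. This is why the corresponding statement in the main text (Corollary~\ref{prop:I_rho_variance_bound}) is flagged as an easy corollary.

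First I would set $Y := \hat I_{\rho,z} - \E[\hat I_{\rho,z}]$, so that $\Var[\hat I_{\rho,z}] = \E[Y^2]$. Applying the tail-integral representation of the expectation of the nonnegative random variable $Y^2$ and then substituting $t = \e^2$ (so $dt = 2\e\,d\e$), I would write
\[\E[Y^2] = \int_0^\infty \pr[Y^2 > t]\, dt = \int_0^\infty \pr\left[|Y| > \e\right] 2\e\, d\e.\]
Next I would insert the two-sided tail bound from Proposition~\ref{prop:I_rho_exponential_concentration}, namely $\pr[|Y| > \e] \leq 2\exp(-c\e^2)$ with $c := nz^2/(18\pi^2 D^2)$, to obtain
\[\E[Y^2] \leq \int_0^\infty 4\e\, \exp(-c\e^2)\, d\e.\]
This integral is elementary: its integrand has antiderivative $-2c\inv\exp(-c\e^2)$, so the integral evaluates to $2/c$. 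Substituting the value of $c$ gives
\[\E[Y^2] \leq \frac{2}{c} = \frac{36\pi^2 D^2}{z^2 n},\]
which is exactly the claimed bound.

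I expect no genuine obstacle at this stage, as all the difficulty is absorbed into the prior concentration estimate. The only points requiring care are the change of variables $t = \e^2$ in the tail integral and the constant bookkeeping: one must verify that the factor $2$ from the two-sided tail and the factor $2$ from the substitution combine with $c\inv$ to reproduce the constant $36\pi^2$ precisely (i.e., that $2 \cdot 18 = 36$), and confirm that the integrability needed for the layer-cake identity is guaranteed by the sub-Gaussian decay of the tail.
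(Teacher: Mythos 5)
Your proposal is correct and follows essentially the same route as the paper: the paper's own proof establishes the McDiarmid tail bound and then simply asserts that it ``translates to'' the variance bound, and your layer-cake integration (with the constant bookkeeping $2 \cdot 18 = 36$ checking out) is precisely that omitted translation step, consistent with the main text's framing of the variance bound as an easy corollary of Proposition~\ref{prop:I_rho_exponential_concentration}. The only cosmetic discrepancy is that the paper states the concentration inequality with an equality sign where it should be ``$\leq$''; you correctly treat it as an upper bound.
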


\begin{proof}
By the Efron-Stein inequality, since $X_1,\dots,X_n$ are independent and identically distributed,
\begin{align*}
\Var \left[ \hat I \right]
& \leq \frac{1}{2}
        \sum_{i = 1}^n
          \E \left[
            \left(
              \log |\hat \Sigma_z| - \log |\hat \Sigma_z^{(i)}|
            \right)^2
          \right] \\
& = \frac{n}{2}
          \E \left[
            \left(
              \log |\hat \Sigma_z| - \log |\hat \Sigma_z^{(1)}|
            \right)^2
          \right],
\end{align*}
where $\hat \Sigma_z^{(1)}$ is our estimator after independently re-sampling the
first sample $X_1$. Applying the multivariate mean-value theorem (Lemma \ref{lemma:log_det_mean_value}), we have
\[\left| \log |\hat \Sigma_z| - \log |\hat \Sigma_z^{(1)}| \right|
  \leq \frac{1}{z} \|\hat \Sigma_z - \hat \Sigma_z^{(1)} \|_F.\]
$\| \hat \Sigma_\tau\inv \|_2 \leq \frac{1}{z}$.
Since $\mathcal{S}(z)$ is convex and the Frobenius norm is supported by an inner product, the operation of projecting onto $\mathcal{S}(z)$ is a contraction. In particular,
$\left\| \left( \hat \Sigma_z - \hat \Sigma_z^{(1)} \right) \right\|_F
  \leq \left\| \left( \hat \Sigma - \hat \Sigma^{(1)} \right) \right\|_F$
Applying the mean value theorem to the function $x \mapsto 2\sin \left( \frac{\pi}{6} x \right)$,
\begin{align}
\left\| \left( \hat \Sigma - \hat \Sigma^{(1)} \right) \right\|_F^2
& = \sum_{j,k = 1}^D \left( \hat \Sigma - \hat \Sigma^{(1)} \right)_{j,k}^2 \\
& \leq \frac{\pi^2}{9} \sum_{j,k = 1}^D \left( \hat \rho_{j,k} - \hat \rho_{j,k}^{(1)} \right)^2 \\
& = \frac{\pi^2}{9} \left\| \hat \rho - \hat \rho^{(1)} \right\|_F^2.
\label{ineq:sine_MVT}
\end{align}
From the formula
\[\hat \rho_{j,k} = 1 - \frac{6 \sum_{i = 1}^n d_{i,j,k}^2}{n(n^2 - 1)},\]
(where $d_{i,j,k}$ denotes the difference in ranks of $X_{i,j}$ and $X_{i,k}$ in $X_{1,j},...,X_{n,j}$ and $X_{1,k},...,Y_{n,k}$, respectively), one can see, since $|d_{1,j,k} - d_{1,j,k}'| \leq n$ and, for $i \neq 1$, $|d_{i,j,k} - d_{i,j,k}'| \leq 1$, that
\[\left| \hat \rho_{j,k} - \hat \rho_{j,k}^{(1)} \right| \leq \frac{18}{n},\]
and hence that
\begin{equation}
\| \hat \rho - \hat \rho^{(1)} \|_F \leq \frac{18D}{n}.
\label{ineq:frobenius_rho}
\end{equation}
It follows from inequality \eqref{ineq:sine_MVT} that
\[\|\hat \Sigma_z - \hat \Sigma_z^{(1)}\|_F \leq \frac{6 \pi D}{n}.\]
Altogether, this gives
\[\left| \log |\hat \Sigma_z| - \log |\hat \Sigma_z^{(1)}| \right|
  \leq \frac{6 \pi D}{z n}.\]
Then, McDiarmid's Inequality gives, for all $\e > 0$,
\[\pr \left[ \left| \hat I - \E \left[ \hat I \right] \right| > \e \right]
  = 2 \exp \left(
    - \frac{n z^2\e^2}{18 \pi^2 D^2}
  \right).\]
This translates to a variance bound of
\[\Var \left[ \hat I \right]
  \leq \frac{36 \pi^2 D^2}{z^2 n}.\]
\end{proof}

\subsection{Lower bound for rank-based estimators in terms of $\Sigma$}
One (perhaps surprising) result of \citet{cai15logDetCov} is that, as long as $D/n \to 0$, the convergence rate of the estimator is independent of the true correlation structure $\Sigma$. Here, we show that this desirable property does not hold in the nonparanormal case.

\begin{proposition}
Consider the $2$-dimensional case
\begin{equation}
X_1,...,X_n \stackrel{i.i.d}{\sim} \mathcal{N}(0,\Sigma), \quad \text{ with } \quad
\Sigma = \begin{bmatrix}
1 & \sigma \\
\sigma & 1
\end{bmatrix},
\label{eq:2D_Sigma_appendix}
\end{equation}
and let $\sigma_* \in (0,1)$. Suppose an estimator $\hat I = \hat I(R)$ of $I_\sigma = -\frac{1}{2}\log(1 - \sigma^2)$ is a function of the empirical rank matrix $R \in \N^{n \times 2}$ of $X$ (as defined in \eqref{def:empirical_rank_matrix}). Then, there exists a constant $C > 0$, depending only $n$, such that the worst-case MSE of $\hat I$ over $\sigma \in (0,\sigma_*)$ satisfies
\begin{align*}
\sup_{\sigma \in (0,\sigma^*]} \E \left[ \left( \hat I(R) - I_\sigma \right)^2 \right]
& \geq \frac{1}{64} \left( C - \log(1 - \sigma_*^2) \right)^2 \\
& \to \infty \quad \text{ as } \quad \sigma_* \to 1.
\end{align*}
\label{prop:sigma_lower_bound_appendix}
\end{proposition}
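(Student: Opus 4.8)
The plan is to prove this via Le Cam's two-point method, exploiting that $R$ takes only finitely many values. Recall the squared-error form of Le Cam's lemma: for any two parameter values $\sigma_0, \sigma_1$ and any estimator $\hat I$ depending on $R$,
\[
\sup_{\sigma} \E\left[(\hat I(R) - I_\sigma)^2\right]
\geq \frac{(I_{\sigma_0} - I_{\sigma_1})^2}{4}\left(1 - d_{TV}\left(P^R_{\sigma_0}, P^R_{\sigma_1}\right)\right),
\]
where $P^R_\sigma$ denotes the law of $R$ under $\mathcal{N}(0,\Sigma)$ with off-diagonal entry $\sigma$, and $d_{TV}$ is total variation distance. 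I would take $\sigma_1 = \sigma_*$ and $\sigma_0$ a fixed value depending only on $n$, chosen below; since $I_{\sigma_*} - I_{\sigma_0} = \frac{1}{2}\bigl(\log(1-\sigma_0^2) - \log(1-\sigma_*^2)\bigr)$, the parameter gap already produces the target factor $(C - \log(1-\sigma_*^2))^2$ with $C = \log(1-\sigma_0^2)$. It then remains only to bound $d_{TV}$ away from $1$.

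The key step is to control $d_{TV}(P^R_{\sigma_0}, P^R_{\sigma_*})$ through the ``comonotone'' event $A = \{R_{i,1} = R_{i,2} \text{ for all } i \in [n]\}$, on which the orderings of the two columns coincide. Since the $n$ samples are i.i.d., relabeling samples is a symmetry of the model, so $\pr_\sigma[R = (\pi,\pi)]$ is the same for every permutation $\pi$; hence, conditioned on $A$, the law of $R$ is uniform over the $n!$ diagonal configurations, regardless of $\sigma$. Consequently, for each diagonal $r$ we have $\min(P^R_{\sigma_0}(r), P^R_{\sigma_*}(r)) = \frac{1}{n!}\min(\pr_{\sigma_0}[A], \pr_{\sigma_*}[A])$, and summing over the $n!$ such $r$ gives
\[
1 - d_{TV}\left(P^R_{\sigma_0}, P^R_{\sigma_*}\right)
= \sum_r \min\left(P^R_{\sigma_0}(r), P^R_{\sigma_*}(r)\right)
\geq \min\left(\pr_{\sigma_0}[A], \pr_{\sigma_*}[A]\right).
\]
Thus it suffices to show $\pr_\sigma[A]$ can be made at least $1/4$ at both points.

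To verify this, I would observe that $A$ fails only if some pair of samples is discordant across the two columns. For a fixed pair $i \neq \ell$, the increments $X_{i,1} - X_{\ell,1}$ and $X_{i,2} - X_{\ell,2}$ are jointly Gaussian with correlation $\sigma$, so the probability they have opposite signs is $\frac{1}{\pi}\arccos(\sigma)$; a union bound gives $\pr_\sigma[A^c] \leq \binom{n}{2}\frac{1}{\pi}\arccos(\sigma)$, which decreases to $0$ as $\sigma \to 1$. I would therefore fix $\sigma_0 \in (0,1)$, depending only on $n$, large enough that $\binom{n}{2}\arccos(\sigma_0)/\pi \leq 3/4$, so that $\pr_\sigma[A] \geq 1/4$ for all $\sigma \geq \sigma_0$; restricting to $\sigma_* \geq \sigma_0$ (the regime $\sigma_* \to 1$ of interest), both $\pr_{\sigma_0}[A]$ and $\pr_{\sigma_*}[A]$ exceed $1/4$, whence $1 - d_{TV} \geq 1/4$. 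Plugging into Le Cam's bound yields
\[
\sup_{\sigma \in (0,\sigma_*]} \E\left[(\hat I(R) - I_\sigma)^2\right]
\geq \frac{(I_{\sigma_0} - I_{\sigma_*})^2}{16}
= \frac{1}{64}\left(C - \log(1-\sigma_*^2)\right)^2 .
\]
I expect the main obstacle to be the total-variation bound: the clean estimate hinges on the exchangeability observation that, conditioned on the comonotone event, the rank distribution is identical across $\sigma$, so that the entire distance is governed by the single scalar $\pr_\sigma[A]$; without this reduction, directly comparing the two discrete laws of $R$ would be unwieldy.
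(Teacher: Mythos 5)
Your proposal is correct and takes essentially the same route as the paper's proof: a Le Cam two-point argument at $\sigma_0$ and $\sigma_*$, with the total variation distance controlled through the comonotone rank event and the exchangeability observation that, conditioned on that event, the law of $R$ is uniform over the $n!$ diagonal rank configurations regardless of $\sigma$. The only differences are cosmetic: you use the constant $1/4$ (rather than $1/8$) in Le Cam's lemma together with $\pr_\sigma[A] \geq 1/4$ (rather than $1/2$), arriving at the same final factor $1/64$, and your Sheppard-formula/union-bound estimate $\pr_\sigma[A^c] \leq \binom{n}{2}\arccos(\sigma)/\pi$ explicitly justifies the limit $\pr_\sigma[A] \to 1$ as $\sigma \to 1$, which the paper merely asserts.
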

\begin{proof}
Note that the rank matrix $R$ can take only finitely many values. Let $\mathcal{R}$ be the set of all $(n!)^D$ possible rank matrices and let $\mathcal{R}_1 \subseteq \mathcal{R}$ be the set of rank matrices that are perfectly correlated. Then, as $\sigma \to 1$, $\pr[R \in \mathcal{R}_1] \to 1$, so, in particular, we can pick $\sigma_0$ (depending only on $n$) such that, for all $\sigma \geq \sigma_0$, $\pr[R \in \mathcal{R}_1] \geq \frac{1}{2}$. Since the data are i.i.d., all rank matrices in $\mathcal{R}_1$ have equal probability. It follows that
\[D_{TV}(\pr_0||\pr_1)
 = \frac{1}{2}\|\pr_0 - \pr_1\|_1
 \leq \frac{1}{2}.\]
Finally, by Le Cam's Lemma (see, e.g., Section 2.3 of \citet{Tsybakov:2008:INE:1522486}),
\begin{align*}
& \inf_{\hat I} \sup_{\sigma \in \{\sigma_0,\sigma_1\}} \E \left[ \left( \hat I - I_\sigma \right)^2 \right] \\
& \geq \frac{(I_{\sigma_*} - I_{\sigma_0})^2}{8} \left( 1 - D_{TV}(P_{\sigma_0},P_{\sigma_1}) \right) \\
& \geq \frac{(\log(1 - \sigma_0^2) - \log(1 - \sigma_*^2))^2}{64} \\
\end{align*}
\end{proof}

\section{Details of Experimental Methods}
\label{sec:experiment_details}
Here, we present details needed to reproduce our numerical simulations. Note that MATLAB source code for these experiments is available at [Omitted for anonymity.], including a single runnable script that performs all experiments and generates all figures presented in this paper.
Specific details needed to reproduce experiments are given in the Appendix,

In short, experiments report empirical mean squared errors based on $100$ i.i.d. trials of each condition. We initially computed $95\%$ confidence intervals, but these intervals were consistently smaller than marker sizes, so we omitted them to avoid cluttering plots. Except as specified otherwise, each experiment followed the same basic structure, as follows: In each trial, a random correlation matrix $\Sigma \in [-1,1]^{D \times D}$ was drawn by normalizing a covariance matrix from a Wishart distribution $W(I_D,D)$ with identity scale matrix and $D$ degrees of freedom. Data $X_1,...,X_n$ were then drawn i.i.d. from $\mathcal{N}(0, \Sigma)$. All estimators were applied to the same data. Unless specified otherwise, $n = 100$ and $D = 25$.

\subsection{Computational Considerations}
In general, the running time of all the nonparanormal estimators considered is $O(D n \log n + D^2 n + D^3)$ (i.e., $O(Dn \log n)$ to rank or Gaussianize the variables in each dimension, $D^2n$ to compute the covariance matrix, and $O(D^3)$ to compute the log-determinant). All log-determinants $\log|\Sigma|$ were computed by summing the logarithms of the diagonal of the Cholesky decomposition of $\Sigma$, as this is widely considered to be a fast and numerically stable approach. Note however that faster ($O(D)$-time) randomized algorithms~\citep{han15largeLogDet} have been proposed to approximate the log-determinant).

\section{Additional Experimental Results}
\label{sec:additional_experiments}
Here, we present variants on the experiments presented in the main paper, which support but are not necessary for illustrating our conclusions.
\subsection{Effects of Other Marginal Transformations}
\label{subsec:other_marginal_transforms_appendix}
In \hyperref[subsec:hat_I_nonrobust]{Section~\ref{subsec:hat_I_nonrobust}}, we showed that the Gaussian estimator $\hat I$ is highly sensitive to failure of the Gaussian assumption for even a small fraction of marginals. Figure~\ref{subfig:exp_2}, illustrates this for the transformation $x \mapsto \exp(x)$, but we show here that this is not specific to the exponential transformation. As shown in Figures~\ref{fig:hat_I_nonrobust_appendix} nearly identical results hold when the marginal transformation $f$ is the hyperbolic tangent function $x \mapsto \tanh(x)$, the cubic function $x \mapsto x^3$, sigmoid function $x \mapsto \frac{1}{1 + e^{-x}}$, or standard normal CDF.

\begin{figure*}[t!]
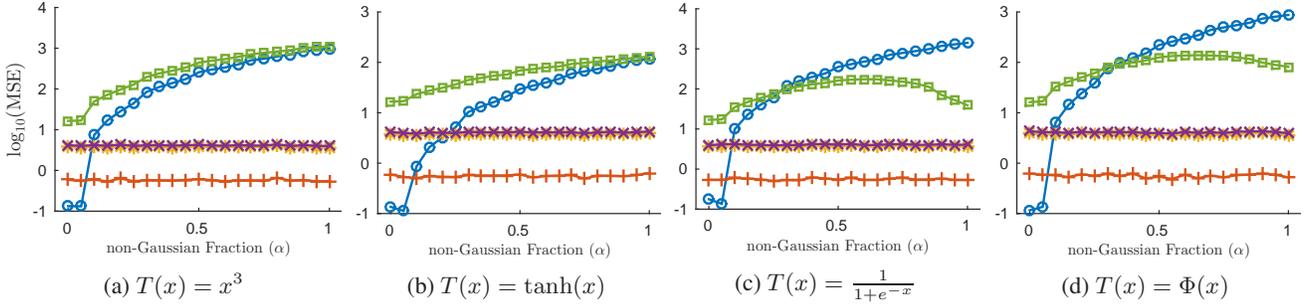

    \centering
    \begin{subfigure}[b]{0.26\textwidth}
        \centering
        \includegraphics[width=\textwidth,trim={0 0 0 0},clip]{fig2_cubic.eps}
        \caption{$T(x) = x^3$}
    \end{subfigure}%
    ~
    \begin{subfigure}[b]{0.24\textwidth}
        \centering
        \includegraphics[width=\textwidth,trim={5mm 0 0 0},clip]{fig2_tanh.eps}
        \caption{$T(x) = \tanh(x)$}
    \end{subfigure}%
    ~
    \begin{subfigure}[b]{0.24\textwidth}
        \centering
        \includegraphics[width=\textwidth,trim={5mm 0 0 0},clip]{fig2_sigmoid.eps}
        \caption{$T(x) = \frac{1}{1 + e^{-x}}$}
    \end{subfigure}%
    ~
    \begin{subfigure}[b]{0.24\textwidth}
        \centering
        \includegraphics[width=\textwidth,trim={5mm 0 0 0},clip]{fig2_normcdf.eps}
        \caption{$T(x) = \Phi(x)$}
    \end{subfigure}
    \caption{Semi-log plot of mean squared error of various estimators over the fraction of non-Gaussian marginals $\alpha \in [0, 1]$, for various marginal transforms $T$.}
    \label{fig:hat_I_nonrobust_appendix}
\end{figure*}

\section{Specific Assumptions for Estimating $H(X)$}

As shown in the main paper, to estimate the entropy of a nonparanormal distribution at the rate $O(D^2/n)$, it suffices to the univariate entropy of each variable $X_j$ at the rate $O(1/n)$. To do this, additional assumptions are required on the marginal densities $p_j$. Here, we give detailed sufficient conditions for this.

Letting $S_j \subseteq \R$ denote the support of $p_j$, the two key assumptions can be roughly classified as follows:

\begin{enumerate}[label=(\alph*),leftmargin=*]
\item
$\frac{1}{2}$-order smoothness\footnote{This is stronger than the $\frac{1}{4}$-order smoothness mandated by the minimax rate for entropy estimation~\citep{birge95densityFunctionals}, but appears necessary for most practical entropy estimators. See Section 4 of \citet{kandasamy15vonMises} for further details.}; e.g., a H\"older condition:
\[\displayindent0pt
\displaywidth\columnwidth
\sup_{x \neq y \in S_j}
      \frac{|p_j(x) - p_j(y)|}{|x - y|^{1/2}}
 < L,\]
or a (slightly weaker) Sobolev condition:
\[\displayindent0pt
\displaywidth\columnwidth
\int_{S_j} p_j^2(x) \, dx < \infty \; \text{ and }
\int_{S_j} \left( |\xi|^{1/2} |\mathcal{F} \left[ p_j \right](\xi)| \right)^2 d\xi < L,\]
(where $\mathcal{F} \left[ p_j \right](\xi)$ denotes the Fourier transform of $p_j$ evaluated at $\xi$) for some constant $L > 0$.
\item
absolute bounds $p_j(x) \in [\kappa_1,\kappa_2]$ for all $x \in S_j$ or $(a_j,b_j)$-exponential tail bounds
\[\frac{f(x)}{\exp(-a_j x^{b_j})} \in [\kappa_1,\kappa_2] \quad \text{ for all } x \in S_j\]
for some $\kappa_1, \kappa_2 \in (0, \infty)$.
\end{enumerate}

Under these assumptions, there are a variety of nonparametric univariate entropy estimators that have been shown to converge at the rate $O(1/n)$ \citep{beirlant97entropyOverview,kandasamy15vonMises,singh16kNNEntropy,moon16improving}.

\section{Lower bounding the eigenvalues of a bandable matrix}
Recall that, for $c \in (0,1)$, a matrix $\Sigma \in \R^{D \times D}$ is called \emph{$c$-bandable} if there exists a constant $c \in (0,1)$ such that, for all $i,j \in D$, $|\Sigma_{i,j}| \leq c^{|i - j|}$.

Here, we show simple bounds on the eigenvalues of a bandable correlation matrix $\Sigma$. While this result is fairly straightforward, a brief search the literature turned up no comparable results. \citet{bickel08regularized}, who originally introduced the class of bandable covariance matrices, separately assumed the existence of lower and upper bounds on the eigenvalues to prove their results. In the context of information estimation, this results of particular interest because, when $c < 1/3$ it implies a dimension-free positive lower bound on the minimum eigenvalue of $\Sigma$, hence complementing our upper bound in Theorem \ref{thm:general_I_rho_MSE_bound}.

\begin{proposition}
Suppose a symmetric matrix $\Sigma \in \R^{D \times D}$ is $c$-bandable and has identical diagonal entries $\Sigma_{j,j} = 1$. Then, the eigenvalues $\lambda_1(\Sigma),...,\lambda_D(\Sigma)$ of $\Sigma$ can be bounded as
\[\frac{1 - 3c}{1 - c} \leq \lambda_1(\Sigma),...,\lambda_D(\Sigma) \leq \frac{1 + c}{1 - c}.\]
In particular, when $c < 1/3$, we have
\[0 < \frac{1 - 3c}{1 - c} \leq \lambda_D(\Sigma).\]
\label{prop:bandable_eigenvalues}
\end{proposition}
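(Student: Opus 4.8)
The plan is to apply the Gershgorin circle theorem, which states that every eigenvalue of a matrix lies within at least one Gershgorin disc, the $i$-th disc being centered at the diagonal entry $\Sigma_{i,i}$ with radius equal to the absolute row sum of the off-diagonal entries. Since the diagonal entries are all $\Sigma_{i,i} = 1$ by assumption, every Gershgorin disc is centered at the same point $1$, so the task reduces to bounding the radii uniformly.

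First I would bound the radius of the $i$-th disc using the $c$-bandable assumption:
\[
R_i = \sum_{j \neq i} |\Sigma_{i,j}| \leq \sum_{j \neq i} c^{|i - j|} = \sum_{k = 1}^{i - 1} c^k + \sum_{k = 1}^{D - i} c^k \leq 2 \sum_{k = 1}^\infty c^k = \frac{2c}{1 - c},
\]
where the geometric series converges because $c \in (0,1)$, and the bound is uniform in $i$ and independent of $D$. Consequently, by Gershgorin's theorem, every eigenvalue $\lambda$ of $\Sigma$ satisfies $|\lambda - 1| \leq \frac{2c}{1 - c}$.

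Next I would simply rearrange this inequality. Writing out $1 - \frac{2c}{1-c} = \frac{1 - 3c}{1 - c}$ and $1 + \frac{2c}{1-c} = \frac{1 + c}{1 - c}$ yields exactly the claimed two-sided bound
\[
\frac{1 - 3c}{1 - c} \leq \lambda_1(\Sigma), \ldots, \lambda_D(\Sigma) \leq \frac{1 + c}{1 - c}.
\]
The final claim follows immediately: when $c < 1/3$ we have $1 - 3c > 0$, and since $1 - c > 0$ for $c \in (0,1)$, the lower bound $\frac{1 - 3c}{1 - c}$ is strictly positive, giving the desired dimension-free positive lower bound on $\lambda_D(\Sigma)$.

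There is essentially no serious obstacle here; the only point requiring minor care is that the geometric-series bound on the row sums must be taken over the doubly-infinite tail (splitting at the diagonal into the $j < i$ and $j > i$ contributions) and shown to be uniform over $i$ and free of $D$, which is precisely what makes the resulting eigenvalue bounds dimension-independent.
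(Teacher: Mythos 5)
Your proof is correct and follows essentially the same route as the paper's: both apply the Gershgorin circle theorem with all discs centered at $1$, bound the off-diagonal row sums by a geometric series to get the uniform, dimension-free radius bound $\frac{2c}{1-c}$, and then rearrange to obtain the two-sided eigenvalue bounds. The only cosmetic difference is that you bound the row sum by the infinite series directly while the paper bounds the finite sum (maximized at the middle row) before relaxing to the same $\frac{2c}{1-c}$; the arguments are otherwise identical.
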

\begin{proof}
The proof is based on the Gershgorin circle theorem~\citep{gershgorin31circleThm,varga09matrixIterativeAnalysis}. In the case of a real symmetric matrix $\Sigma$, this states that the eigenvalues of $\Sigma$ lie within a union of intervals
\begin{equation}
\left\{ \lambda_1(\Sigma),...,\lambda_D(\Sigma) \right\} \subseteq \bigcup_{j = 1}^D \left[ \Sigma_{j,j} - R_j, \Sigma_{j,j} + R_j \right],
\label{inclusion:gershgorin}
\end{equation}
where $R_j := \sum_{k \neq j} |\Sigma_{j,k}|$ is the sum of the absolute values of the non-diagonal entries of the $j^{th}$ row of $\Sigma$. In our case, since the diagonal entries of $\Sigma$ are all $\Sigma_{j,j} = 1$, we simply have to bound
\begin{align*}
\max_{j \in [D]} R_j \leq \sum_{k \neq j} c^{|k - j|}.
\end{align*}
This geometric sum is maximized when $j = \lceil D/2 \rceil$, giving
\[R_j
 \leq 2\sum_{\delta = 1}^{\lfloor D/2 \rfloor} c^{\delta}
 = 2c\frac{1 - c^{\lfloor D/2 \rfloor}}{1 - c}\
 \leq \frac{2c}{1 - c}.\]
Finally, the inclusion \eqref{inclusion:gershgorin} gives
\[\lambda_D(\Sigma) \geq 1 - \frac{2c}{1 - c} = \frac{1 - 3c}{1 - c} > 0\]
when $c < 1/3$.
$1 + \frac{2c}{1 - c} = \frac{1 + c}{1 - c}$.
\end{proof}





\end{document}